\crefname{hypothesis}{Hypothesis}{Hypotheses}
\title{A Gradient-Based Optimization Method Using the Koopman Operator
\thanks{Submitted to the editors December 20, 2023.
\funding{
XY was supported by National Science Foundation (NSF) CAREER DMS-2143915. YL was partially supported by NSF CAREER  DMS-1846690. YM was supported in part by the National Science Foundation Grants NSF-SCALE MoDL(2134209) and NSF-CCF-2112665.
}}}
\author{
Mengqi Hu\thanks{Department of Mathematics \& School of Data Science and Society, University of North Carolina at Chapel Hill, NC
  (\email{humengqi@unc.edu}, \email{yflou@unc.edu})}
\and
Bian Li\thanks{Department of Industrial and Systems Engineering, Lehigh University, Bethlehem, PA 
  (\email{bil215@lehigh.edu}, \email{xiy518@lehigh.edu})}
\and 
Yi-An Ma\thanks{Halicio\u{g}lu Data Science Institute \& Department of Computer Science and Engineering, University of California San Diego, La Jolla, CA (\email{yianma@ucsd.edu})}
\and
Yifei Lou\footnotemark[2]
\and
Xiu Yang\footnotemark[1]
}
\newcommand{\dif}{\mathrm{d}}
\newcommand{\tensor}[1]{\mathbf{#1}}
\newcommand{\comm}[1]{} 
\newcommand{\vphin}{\varphi^N}
\newcommand{\bvphin}{\bm{{\varphi}}^N}
\newcommand{\mr}{\mathbb{R}}
\newcommand{\vphi}{\varphi}
 \newcommand{\h}[1]{\bm{#1}}
\newcommand{\trans}{^\mathsf{T}}
\begin{document}

\maketitle

\begin{abstract}
In this paper, we propose a novel approach to solving optimization problems by reformulating the optimization problem into a dynamical system, followed by the adaptive spectral Koopman (ASK) method. The Koopman operator, employed in our approach, approximates the evolution of an ordinary differential equation (ODE) using a finite number of eigenfunctions and eigenvalues. We begin by providing a brief overview of the Koopman operator and the ASK method. Subsequently, we adapt the ASK method for solving a general optimization problem. Moreover, we provide an error bound to aid in understanding the performance of the proposed approach, marking the initial step in a more comprehensive numerical analysis. Experimentally, we demonstrate the applicability and accuracy of our method across a diverse range of optimization problems, including min-max problems. Our approach consistently yields smaller gradient norms and higher success rates in finding critical points compared to state-of-the-art gradient-based methods. We also observe the proposed method works particularly well when the dynamical properties of the system can be effectively modeled by the system's behaviors in a neighborhood of critical points.
\end{abstract}

\begin{keywords}
  Dynamical systems, Koopman operator, spectral-collocation method, gradient flow, min-max optimization
\end{keywords}

\begin{AMS}
 37N30, 37N40, 37Mxx, 46N10, 47N10
\end{AMS}







\section{Introduction}
Optimization plays a fundamental role in many areas of science, engineering, and machine learning.
One of the most fundamental optimization methods is based on the gradient of an objective function   since the gradient at a given point provides information about the direction of the steepest increase of the function at that point. By moving along the opposite direction of the gradient, one can iteratively minimize the function, while navigating the search space until a certain type of optimal solution is reached.  This technique is particularly effective when dealing with continuously differentiable functions.
There are four major categories of gradient-based minimization algorithms, each with its own characteristics and advantages: 
\begin{itemize}
    \item \textit{Gradient Descent} (GD). This is the most basic form of gradient-based optimization. It involves iteratively
taking a small step along the opposite direction of the gradient at the current point. The step size is determined by a hyperparameter, which is referred to as a learning rate in deep learning. 
    \item \textit{Gradient Descent with Momentum} (GDM). This technique accelerates the standard GD by incorporating a momentum term when advancing to a new iterate. This addition helps to avoid local minima and often leads to faster convergence \cite{POLYAK19641}.
    \item \textit{Stochastic Gradient Descent} (SGD). As a variant of gradient descent, SGD computes the gradient using a random subset of the data at each iteration. The additional randomness enhances the algorithm's robustness to initial conditions and noisy data, facilitating convergence to a critical point \cite{bottou2010large,bottou2012stochastic,ruder2016overview}. 
    \item \textit{Adaptive Gradient Methods} (AGM). To deal with noisy data and non-convex objective functions, AGM chooses the learning rate for each parameter individually based on their historical gradients.   
    It has been demonstrated that this type of algorithm, including Adaptive subgradient methods (AdaGrad) \cite{duchi2011adaptive}
    and Adaptive Moment Estimation (Adam) \cite{kingma2014adam},
is particularly effective for problems with sparse gradients or ill-conditioned objective functions.
\end{itemize}


As we explore the realm of optimization using gradient-based methods, we are also intrigued by a specific type of optimization problem that involves the simultaneous minimization of one set of variables and the maximization of another set. This type of optimization is commonly referred to as a min-max problem, however,
the duality adds a layer of complexity, akin to fundamental concepts found in game theory. The von Neumann minimax Theorem, a cornerstone in game theory and optimization \cite{v1928theorie}, forms a bridge between min-max optimization problems and zero-sum two-player games. Specifically with finite strategy spaces, this theorem asserts that in a zero-sum two-player game with perfect information, both players' optimal strategies can be determined by solving a min-max optimization problem. This connection highlights the intriguing interactions between optimization and game theory in the pursuit of efficient solutions.

In this work, we develop an innovative gradient-based technique for optimization problems using the adaptive spectral Koopman (ASK) method~\cite{li2023adaptive}, originally designed for solving ordinary differential equations (ODEs).
The ASK method relies on the Koopman operator~\cite{koopman1931hamiltonian,koopman1932dynamical}, an infinite-dimensional linear operator for capturing the intricate nonlinearity in dynamical systems described by differential equations. Our approach shifts the perspective on optimization by interpreting it as a dynamical system rather than a standalone problem. This reinterpretation allows us to seek an equilibrium of the system through a finite-dimensional approximation achieved by combining various techniques, including eigen-decomposition, polynomial interpolation, and the Koopman operator.
In contrast to traditional optimization methods that typically depend on fixed or adaptively chosen step sizes,  the ASK method continuously evolves the system, bringing in the adaptability of the optimization process. This transition can be understood as a shift from time-domain discretization, as found in traditional gradient methods, to spatial-domain discretization. 
This innovative interpretation of gradient-based algorithms, elucidated by the ASK method, opens the door to potential improvements in the performance and convergence properties of optimization techniques. 
The main contributions of this paper are threefold.
\begin{enumerate}
    \item[(1)] We propose an efficient workflow using the Koopman operator to obtain an equilibrium of a dynamical system, which coincides with a critical point of the corresponding optimization problem. 
    \item[(2)] By further integrating the idea of sparse grids  \cite{li2022sparse} into our method, we significantly reduce the computational costs of the proposed method. Additionally, the implementation of an adaptive scheme allows our approach to handle complex problems with increased efficiency. We further analyze an error bound for a special case. 
    \item[(3)] We conduct extensive experiments on standard testing functions in both minimization and min-max problems, demonstrating that the proposed algorithm achieves significant improvements in accuracy and success rates compared with some existing gradient-based optimization techniques. 
\end{enumerate}


The rest of the paper is organized as follows. Section \ref{sect:literature} provides a concise review of the relevant literature in optimization, the Koopman operator, and the ASK method. In Section \ref{sect:proposed} we describe the proposed approach with a particular focus on the modifications made to the ASK method. Section \ref{sect:experiment} is devoted to numerous examples including three bowl-shaped testing functions, four valley-shared functions, two high-dimensional functions, and four min-max problems to showcase the efficiency of the proposed algorithm in comparison to various baseline algorithms. Finally, Section \ref{sec:dicussion_conclusion} concludes the paper.

\section{Literature Review}\label{sect:literature}

We present the formulations of a general minimization and a min-max problem in \Cref{sect:review-opt}, along with gradient-based optimization algorithms. In \Cref{sect:review-Koopman}, we provide a brief review of the concepts of the Koopman operator. \Cref{sect:ASKreview} is devoted to its applications in solving an ODE via an adaptive spectral method.

\subsection{Optimization}\label{sect:review-opt}
Consider a general unconstrained 
minimization problem, defined by
\begin{equation}
    \min_{\h x\in\mathbb R^{d}}  f(\h x),
    \label{eq:setup}
\end{equation}
where $f: \mathbb R^d\rightarrow \mathbb R$ is a continuously differentiable function to be optimized, preferably a smooth  $C^{\infty}$ function.
Traditionally,  gradient-based methods are employed to solve \eqref{eq:setup} by gradually decreasing the objective function $f$ following the negative gradient direction. 
The most straightforward approach to solving the problem as a dynamical system is through a generic formula of gradient descent (GD) algorithms, which aims to minimize $f(\h{x})$. This formula can be expressed as
\begin{align}
    \begin{cases}
        \h p^{(k+1)} &= -\nabla f(\h x^{(k)})\\
        \h x^{(k+1)} &= \h x^{(k)} + \alpha^{(k+1)}\h p^{(k+1)},
    \end{cases}\label{eq:gradient descend}
\end{align}
where $k$ denotes the iteration number and $\alpha^{(k+1)}>0$ represents a positive step size that can either be fixed or updated iteratively. 
Numerous variants of gradient descent exist, differentiated by how the step size is determined. For instance, steepest descent (SD) involves an exact line search to achieve the maximum descent along the gradient direction, making the descent the steepest. Its procedure can be outlined as follows,
\begin{align}
    \begin{cases}
        \h p^{(k+1)} &= -\nabla f(\h x^{(k)})\\
        \alpha^{(k+1)} &= \arg\min_{\alpha} f(\h x^{(k)} + \alpha\h p^{(k+1)})\\
        \h x^{(k+1)} &= \h x^{(k)} + \alpha^{(k+1)}\h p^{(k+1)}.
    \end{cases}\label{eq:steepest descend}
\end{align}
However,  SD does not work well empirically in most cases, since such a local descending property does not necessarily coincide with the overall descending of the original function.
Noting that the search direction in each iteration of the scheme \eqref{eq:steepest descend} solely relies on the information at the current step $\h{x}^{(k)}$, one can also incorporate previous steps into the iteration, which gives rise to momentum-based algorithms \cite{boyd2004convex,hu2023accelerated}. The term ``momentum" draws an analogy to a massive ball rolling on the surface of the objective function, where the update of each step is retained and utilized during the process.
Consequently, the following iteration 
\begin{align}
    \begin{cases}
    \h p^{(k+1)} &= -\nabla f(\h x^{(k)}) + \beta^{(k+1)}\h p^{(k)} \\
    \h x^{(k+1)} &= \h x^{(k)} + \alpha^{(k+1)}\h p^{(k+1)},
    \end{cases}\label{eq:grad momentum}
\end{align}
is referred to as a heavy ball (HB) algorithm \cite{POLYAK19641}. Both $\alpha^{(k+1)}$ and $\beta^{(k+1)}$ in Equation \eqref{eq:grad momentum} can be either fixed or adaptively selected based on a specific scheme.
A distinct category of momentum-based algorithms was developed by Yurii Nesterov \cite{nesterov1983method,nesterov2003introductory}. Beginning with $t^{(0)} = 1$, Nesterov's accelerated gradient (NAG) is formulated by,
\begin{align}
    \begin{cases}
    t^{(k+1)} &= \frac{ 1+\sqrt{4(t^{(k)})^2 + 1} }{2}\\
    \h p^{(k+1)} &= -\nabla f\left( \h x^{(k)} \right)\\
    \h y^{(k+1)} &= \h x^{(k)} + \alpha^{(k+1)}\h p^{(k+1)}\\
    \h x^{(k+1)} &= \h y^{(k+1)} + \frac{t^{(k)} - 1}{t^{(k+1)}}(\h y^{(k+1)} - \h y^{(k)}).
    \end{cases}\label{eq:Nest accel grad}
\end{align}
Similar to other gradient-based algorithms, the step size $\alpha^{(k+1)}$ in NAG can be constant or updated during the iteration. For any convex objective function $f(\cdot)$, NAG achieves a convergence rate of $O(\frac{1}{k^2})$, which is an improvement over the rate of $O(\frac{1}{k})$ obtained by standard gradient-based methods. This momentum scheme can be further accelerated by employing a suitable restart strategy with provable guarantees under certain conditions \cite{nemirovski1985optimal,giselsson2014monotonicity,su2014differential}.




We are also interested in a min-max problem formulated as follows,
\begin{equation}\label{eq:minmax setup}
    \min_{\h x\in \mathbb{R}^{m}}\max_{\h y\in \mathbb{R}^{n}}  f(\h x,\h y),
\end{equation}
for a continuously differentiable function $f(\cdot, \cdot)$.
Solving \eqref{eq:minmax setup} by gradient-based methods
amounts to iterating between gradient descend on $\h x$ and gradient ascend on $\h y$. In particular, a gradient descend/ascend (GDA) algorithm~\cite{ruder2016overview}  is expressed as follows,
\begin{align}
\begin{cases}
        \h x^{(k+1)} &= \h x^{(k)} - \alpha^{(k+1)} \nabla_{\h x} f(\h x^{(k)},\h y^{(k)})\\
        \h y^{(k+1)} &= \h y^{(k)} + \alpha^{(k+1)} \nabla_{\h y} f(\h x^{(k)},\h y^{(k)}),
\end{cases}
\label{eq:gradient descend ascend}
\end{align}
where  $\alpha^{(k+1)}>0$ represents a positive step size that can either be fixed or updated iteratively. 
Unfortunately, there exist instances where the system of equations in \eqref{eq:gradient descend ascend} exhibits a cyclic behavior. Motivated by a specific function $f$ with an initial point  that leads to such cycles, Daskalakis et al.~\cite{daskalakis2017training} proposed an optimistic gradient descent/ascent (OGDA) approach,
\begin{align}
\begin{cases}    
        \h x^{(k+1)} &= \h x^{(k)} - \alpha^{(k+1)} \nabla_{\h x} f(\h x^{(k)},\h y^{(k)}) -  \alpha^{(k+1)}( \nabla_{\h x} f(\h x^{(k)},\h y^{(k)}) -  \nabla_{\h x} f(\h x^{(k-1)},\h y^{(k-1)})) \\
        \h y^{(k+1)} &= \h y^{(k)} + \alpha^{(k+1)} \nabla_{\h y} f(\h x^{(k)},\h y^{(k)}) + \alpha^{(k+1)} (\nabla_{\h y} f(\h x^{(k)},\h y^{(k)})-\nabla_{\h y} f(\h x^{(k-1)},\h y^{(k-1)})),
\end{cases}
    \label{eq:Optimistic Gradient Descent Ascent}
\end{align}
and established its convergence for bilinear objective functions, i.e., $f(\h x, \h y) = \h x^{\trans} A \h y$ \cite{daskalakis2017training}. 
It is worth noting that OGDA resembles the HB method  \cite{POLYAK19641,boyd2004convex},
but a key distinction  lies in the presence of the ``negative momentum'' in OGDA, while the HB method employs a ``positive momentum.'' In addition to gradient-based methods, alternative methods for solving the min-max problem \eqref{eq:minmax setup} are documented in \cite{boyd2004convex, ben2009robust, laraki2012semidefinite, raghunathan2018semidefinite}.
All the aforementioned algorithms  can be understood in the framework of solving a certain ODE defined by, 
\begin{align}
    \begin{cases}
    \mathcal{D}\h x(t) = \h u(\h  x)\\
    \h x(t_0) = \h x_{0},
    \end{cases}\label{eq:Continuous Grad Flow}
\end{align}
where $\mathcal{D}$ is a linear differential operator, $\h u $ is a continuous function which is usually chosen as $\h u(\h x) = -\nabla f(\h x)$ for minimization and $\h u(\h x, \h y) = -\nabla f(\h x, -\h y)$ for min-max problems. It is straightforward that the gradient descend method \eqref{eq:gradient descend} can be formulated by the forward Euler scheme.  As for the momentum acceleration methods, the HB   formula \eqref{eq:grad momentum} applied the Euler's method to the following ODE  \cite{JMLR:v22:20-195}, 
\begin{align}
    \ddot{\h x} + \epsilon\dot{\h x} +\nabla f(\h x) = \h 0,
    \label{eq:polyak ODE}
\end{align}
where $\epsilon$ can be viewed as a viscosity coefficient in a damping system. 
From an optimization perspective,  the ideal value for $\epsilon$ is associated with the Lipschitz and strong convexity properties of the function $f$ \cite{siegel2019accelerated}.
Additionally, NAG is related to the ODE~\cite{su2014differential}
\begin{align}    
    \ddot{\h x} + \frac{3}{t}\dot{\h x}+\nabla f(\h x) = \h 0.
    \label{eq:nesterov ODE}
\end{align}
Note that it is necessary for $\nabla f(\mathbf{x})$ to be Lipschitz continuous with a Lipschitz constant $L$, and the step size should satisfy $\alpha < \frac{1}{cL}$ for some constant $c$ to ensure the guaranteed acceleration achieved by these methods. This requirement implies that the step size $\alpha$ should be carefully tuned, and its optimal value may vary from one problem to another \cite{su2014differential}. There is a wealth of research on continuous gradient flows and their acceleration schemes,  such as \eqref{eq:polyak ODE} \eqref{eq:nesterov ODE}, using higher-order ODEs \cite{santambrogio2017euclidean,wang2022accelerated}. However, most of them still adopt traditional numerical ODE methods that require careful tuning of the step size.


\subsection{Koopman operator}\label{sect:review-Koopman}
We review the Koopman operator \cite{koopman1932dynamical, mezic2005spectral}, which maps a finite-dimensional nonlinear system to a linear one. This mapping enables us to utilize the solution of an infinite-dimensional linear system to find an equilibrium of the nonlinear system. Define a family of the Koopman operators by
\begin{align}\label{eq:koopman}
    \mathcal{K}_{t} g(\h x(t_0)) = g\Big(\h x(t_0+t)\Big),
\end{align}
where $g$ is any (scalar) observable function of the state variables $\h x(t)$  at time $t$.  The ``derivative'' of $\mathcal K_t$, referred to as the \textit{infinitesimal generator} (or simply generator) of the Koopman operators, is defined by
\begin{align}
    \mathcal{K} g = \lim_{t\to 0}\frac{\mathcal{K}_{t}g - g}{t}\label{eq:generator}.
\end{align}
As analogous to the chain rule in calculus, one has
\begin{align}\label{eq:generator2}
    \mathcal{K} g(\h x) = \nabla g(\h x) \cdot \frac{\dif\h x}{\dif t} = \frac{\dif g(\h x)}{\dif t}.
\end{align}






Suppose $\varphi$ is an eigenfunction of $\mathcal K$ with the associated eigenvalue $\lambda$, i.e.,  $\mathcal K \varphi (\h x) = \lambda \varphi (\h x)$. It follows from  \eqref{eq:generator2} that $\lambda \varphi (\h x) = \mathcal K\varphi(\h x) = \frac{d\varphi(\h x)}{dt}$, which implies that $\varphi (\h x(t_0 + t)) =\exp(\lambda t) \varphi (\h x(t_0))$. It follows from the definition of the Koopman operator \eqref{eq:koopman}   that
\begin{align}\label{eq:eig-Kt}
    \mathcal K_{t} \varphi(\h x(t_0))=\varphi (\h x(t_0 + t)) = \exp(\lambda t) \varphi (\h x(t_0)).
\end{align}

If $g$ belongs to the function space spanned by all the eigenfunctions $\varphi_{j}$ of $\mathcal K_t$, i.e., $g(\h x) = \sum_{j}^{\infty} c_{j} \varphi_{j}(\h x)$ with coefficients $\{c_j\}$,  the linearity of $\mathcal K_t$ gives
\begin{align}
    \mathcal{K}_{t}[g(\h x(t_{0}))] = \mathcal{K}_{t}[\sum_{j}^{\infty} c_{j} \varphi_{j}(\h x(t_0))] = \sum_{j}^{\infty} c_{j} \mathcal{K}_{t}[\varphi_{j}(\h x(t_0))].
\end{align}
The above equation implies that
\begin{align}
    g(\h x(t_0 + t)) = \sum_{j}^{\infty} c_{j} \varphi_{j}(\h x(t_{0})) \exp(\lambda_{j}t),\label{eq:g}
\end{align}
where $\{(\varphi_j,\lambda_j)\}_j^\infty$ are eigenpairs of the Koopman operator $\mathcal K_t$, each satisfying \eqref{eq:eig-Kt}.

\subsection{Adaptive Spectral Koopman (ASK) methods}\label{sect:ASKreview}

Li et al.~\cite{li2023adaptive} proposed a truncated version of \cref{eq:g} for solving the dynamical system governed by $\dot{\h x}( t) = \h u(\h x),$ where 
$\h u\in\mathbb R^d$ is often called the dynamics of the state variable  $\h x\in\mathbb R^d$. 
Given an integer $N$, suppose each eigenfunction $\varphi_{j}$ of $\mathcal K_t$ can be approximated by interpolation polynomials, denoted as $\varphi_{j}^{N}$. Then, a finite-dimensional approximation of $g$ in \eqref{eq:g} can be expressed by
\begin{align}    \label{eqn:trun_expansion}
    g(\h x(t_0 + t)) \approx g^N(\h x(t_0+t)) := \sum_{j=0}^{N} \tilde{c}_{j} \varphi^{N}_{j}(\h x(t_{0})) \exp (\tilde\lambda_{ j } t ),
\end{align}
where $\lambda_{j}, c_{j}$ are approximated by $\tilde{\lambda}_{j}$ and $\tilde{c}_{j}$, respectively. For each eigenfunction $\varphi^{N}_{j},$ Li et al.~\cite{li2022sparse} further considered a finite interpolation 
on a set of (sparse) grid points. 
Specifically, 
a finite approximation of an arbitrary function $\vphi$ is given by
\begin{align}\label{eq:poly-approx}
    \vphi(\h x) \approx \vphi^N(\h x) = \sum_{l=1}^N w_l \Psi_l(\h x),
\end{align}
where $\{\Psi_l\}$ are Chebyshev polynomials. When evaluating $\h x$ on a set of sparse grid points, denoted by $\{ \bm \xi_l \}_{l=1}^N$ with $\bm \xi_l := (\xi_{l_1}, \xi_{l_2}, \dotsc, \xi_{l_d}) \in \mr^d$, \eqref{eq:poly-approx} can be written by a matrix-vector multiplication, i.e.,
\begin{align}
    \begin{bmatrix}
        \vphi^N(\bm \xi_1) \\
        \vphi^N(\bm \xi_2) \\
        \vdots \\
        \vphi^N(\bm \xi_N)
    \end{bmatrix}
    = 
    \begin{bmatrix}
        \Psi_1(\bm \xi_1) & \Psi_2(\bm \xi_1) & \dots & \Psi_N(\bm \xi_1) \\
        \Psi_1(\bm \xi_2) & \Psi_2(\bm \xi_2) & \dots & \Psi_N(\bm \xi_2) \\
        \vdots & \vdots & \ddots & \vdots \\
        \Psi_1(\bm \xi_N) & \Psi_2(\bm \xi_N) & \dots & \Psi_N(\bm \xi_N) \\
    \end{bmatrix}
    \begin{bmatrix}
        w_1 \\
        w_2 \\
        \vdots \\
        w_N 
    \end{bmatrix},\label{eqn:sparse_grid_interp}
\end{align}
or $\bm\vphi^N=\tensor M\h w$ for short, where each element of $\tensor M$ is given by $M_{ij}=\Psi_j(\bm\xi_i)$ and $\h w = (w_1, \cdots, w_N)\trans.$ Note that $\vphi$ denotes a function, while the bold-faced version $\bm\vphi$ denotes the function evaluated at a set of points. 

Taking partial derivatives of \eqref{eq:poly-approx} with respect to $x_i$ yields
\begin{align}\label{eq:finite-approx}
\frac{\partial \vphi^N(\bm x)}{\partial x_i} = \sum_{l=1}^N w_l \frac{\partial \Psi_l(\bm x)}{\partial x_i} .
\end{align}
Denote the matrix $\tensor G_i$ as $\partial_{x_i}\Psi_l(\bm x)$
evaluated at $\{ \bm \xi_l \}_{l=1}^N$, i.e.,
\begin{align}\label{eqn:Gi}
    \tensor G_i = 
    \begin{bmatrix}
         \frac{\partial \Psi_1}{\partial x_i} (\bm \xi_1) & \frac{\partial \Psi_2}{\partial x_i} (\bm \xi_1) & \dots & \frac{\partial \Psi_N}{\partial x_i} (\bm \xi_1) \\
        \frac{\partial \Psi_1}{\partial x_i} (\bm \xi_2) & \frac{\partial \Psi_2}{\partial x_i} (\bm \xi_2) & \dots & \frac{\partial \Psi_N}{\partial x_i} (\bm \xi_2) \\
        \vdots & \vdots & \ddots & \vdots \\
        \frac{\partial \Psi_1}{\partial x_i} (\bm \xi_N) & \frac{\partial \Psi_2}{\partial x_i} (\bm \xi_N) & \dots & \frac{\partial \Psi_N}{\partial x_i} (\bm \xi_N) \\
    \end{bmatrix}\in \mr^{N \times N},
\end{align}
then  \eqref{eq:finite-approx} can be expressed by
\begin{equation}\label{eq:finite-approx-matrix-form}
   \frac{\partial \vphi^N(\bm x)}{\partial x_i} = \tensor G_i \bm w.
\end{equation}

Now we regard each component of the dynamic $\h u$ as an observable that the Koopman operator operates on, i.e., $\h u:=  (g_1(\h x), g_2(\h x), . . . , g_d(\h x))\trans$, then a system of observables becomes
\begin{align}\label{eq:sys-obs}
    \frac{\dif\h u(\h x)}{\dif t} = \mathcal{K} \h u(\h x) = 
    \begin{bmatrix}
        \mathcal{K} g_{1}(\h x)\\
        \mathcal{K} g_{2}(\h x)\\
        \cdots\\
        \mathcal{K} g_{d}(\h x)
    \end{bmatrix}
    = \sum_{j}^{\infty} \lambda_{j} \varphi_{j}(\h x)\h c_{j},
\end{align}
where $\h c_{j} = (c_j^{1},\cdots, c^{d}_{j})\trans \in \mathbb{C}^{d}$ is called the $j$-th Koopman mode of the system.
It follows from \eqref{eq:generator2} that 
$\mathcal{K} \vphi(\h x) = \frac{\dif \h x}{\dif t} \cdot \nabla \vphi(\h x)$ for any function $\varphi$. 
Since $\frac{\dif \h x}{\dif t} = \h u(\h x)$, we have
\begin{align}
    \mathcal{K} \vphi = \h u \cdot \nabla \vphi = g_1 \frac{\partial  \vphi}{\partial x_1} + g_2\frac{\partial \vphi}{\partial x_2} + \dotsc + g_d\frac{\partial \vphi}{\partial x_d}.
    \label{eqn:generator_expansion}
\end{align}
It follows from \eqref{eq:finite-approx-matrix-form} that the Koopman operator $\mathcal{K}$ given in \eqref{eqn:generator_expansion} has a finite approximation by $\tensor K \in \mr^{N \times N}$, that is,
\begin{align}
    \label{eqn:ASK_finite_approximation}
    \tensor K \vphi^N = \sum_{i=1}^d \: \text{diag}{\big( g_i(\bm \xi_1), \dotsc, g_i(\bm \xi_N) \big)} \: \tensor G_i \bm w,
\end{align}
which holds for the function $\vphi^N$ as an approximation to the function $\vphi$ as in \eqref{eq:poly-approx}. The choice of  $\vphi^N$ determines the coefficient vector $\h w.$

Suppose the discretized Koopman operator $\tensor K$  has an eigenpair $(\vphi^N_j, \tilde \lambda_j)$, i.e., $\tensor K \vphi^N_j=\tilde \lambda_j\vphi^N_j.$ 
 Applying \cref{eqn:ASK_finite_approximation} on  $\vphi^N_j$ yields
\begin{align}
    \sum_i^d \: \text{diag}{\big( g_i(\bm \xi_1), \dotsc, g_i(\bm \xi_N) \big)} \: \tensor G_i \bm w_j =\tensor K \vphi^N_j= \tilde \lambda_j \tensor M \bm w_j,
    \label{eqn:ASK generalized_eigen_problem}
\end{align}
or shortly $\tensor U \bm w_j = \tilde \lambda_j \tensor M \bm w_j$ with 
$\tensor U := \sum_i^d \: \text{diag}{\big( g_i(\bm \xi_1), \dotsc, g_i(\bm \xi_N) \big)} \: \tensor G_i$. Given $\tensor U$ and $\tensor M,$ the coefficients $\h w_j$ can be solved by a generalized eigenvalue problem. For compactness, we write it in a matrix form 
\begin{equation}
  \label{eqn:generalized_eig}
\tensor U \tensor W = \tensor M \tensor W \tensor \Lambda, 
\end{equation}
where
$ \tensor W := 
    \begin{bmatrix}
        \bm w_1 & \bm w_2 & \dots & \bm w_N
    \end{bmatrix}$ and $
    \tensor \Lambda := \text{diag}\left(\tilde \lambda_1, \tilde \lambda_2,
    \dotsc, \tilde\lambda_N\right)$.
Then, the eigenfunction matrix can be defined by $\tensor \Phi^N := \tensor M
\tensor W$, whose $j$th column is $\bvphin_j$. 


Setting $t = 0$ in \eqref{eqn:trun_expansion} with a particular component $g_i$ in the dynamics $\h u$, we can compute  Koopman modes $\tilde c_{ji}$ from 
\begin{align*}
     g_i(\bm x_0) \approx \sum_{j = 1}^N \tilde c_{ji} \vphi_j^N(\bm x_0) =: g_i^N(\bm x_0),
\end{align*}
which must be satisfied for different initial conditions in the neighborhood of $\bm x_0$.
Specifically for all sparse grid points, we have 
\begin{align}\label{eqn:g-temp}
    g_i^N(\bm \xi_l) = \sum_{j = 1}^N \tilde c_{ji} \vphi_j^N(\bm \xi_l), \quad l = 1, 2, \dotsc, N,
\end{align}
which can be expressed  in a matrix form $\tensor \Phi^N \tilde{\bm c}_i = \tensor \Xi_i$ with
\begin{align}
    \tensor\Xi_i := 
    \begin{bmatrix}
        g_i(\bm \xi_1) \\
        \vdots\\
        g_i(\bm \xi_l) \\
        \vdots \\
        g_i(\bm \xi_N)
    \end{bmatrix} \quad\mbox{and}\quad
   \tilde{\h c}_i = \begin{bmatrix}
      \tilde c_{1i}\\
       \vdots\\
      \tilde c_{li}\\
       \vdots\\
      \tilde c_{Ni}
   \end{bmatrix}.\label{eqn:XiandC}
\end{align}
Putting $\tensor \Xi_i$ and $\tilde{\h c}_i$ as the $i$th column of the matrices $\tensor \Xi$ and $\tilde{\tensor C}$, respectively, we can solve $\tilde{\tensor C}$ from $\tensor \Phi^N \tilde{\tensor C} = \tensor \Xi$. As $\bm\xi_1=\bm x_0$ by construction, $\vphi_j^N(\bm x_0)$ is the first element of vector $ \bvphin_j$, denoted by $(\bvphin_j)_1$.
Thus, 
the solution of the dynamical system  $\dot{\h x}( t) = \h u(\h x)$ can be approximated by 
\begin{align}
    \label{eqn:sol}
    \bm x(t) = \sum_{j = 1}^N \tilde {\h c}_j \vphin_j(\bm x_0) e^{\tilde \lambda_j t} = \sum_{j = 1}^N \tilde {\h c}_j ( \bvphin_j)_1 e^{\tilde \lambda_j t}.
\end{align}

\section{The proposed approach}
\label{sect:proposed}


We regard a critical point of the optimization problem \eqref{eq:setup}  as a steady state of the following dynamical system,
\begin{align}
    \begin{cases}
    \dot{\h x}( t) = \h u(\h x)\\
    \h x(t_0) = \h x_{0},
    \end{cases}\label{eq:Dynamicl system}
\end{align}
where $\h u(\h x) := -\nabla f(\h x)$   and $\h x_0$ is an initial point. In other words, we find a critical point of \eqref{eq:setup} by evolving the corresponding dynamical system \eqref{eq:Dynamicl system} for a sufficiently long time to arrive at the equilibrium that satisfies $\nabla f(\h x) = \h 0$ . 
As for the min-max problem \eqref{eq:minmax setup}, its corresponding dynamical system is
\begin{align}
    \begin{cases}
    \dot{\h x}( t) = -\nabla_{\h x}f(\h x, \h y)\\
    \dot{\h y}( t) = \nabla_{\h y}f(\h x, \h y)\\
    \h x(t_0) = \h x_{0}\\
    \h y(t_0) = \h y_{0},
    \end{cases}\label{eq:DSforminmax}
\end{align}
which can be written in terms of \eqref{eq:Dynamicl system} with $\bar{\h x}=(\h x, \h y)^{\trans}$ and $\h u(\bar{\h x}) = (-\nabla_{\h x} f(\h x,\h y), \nabla_{\h y} f(\h x,\h y))^{\trans}$.  Consequently, we only focus on the general optimization problem \eqref{eq:setup} and its corresponding dynamical system \eqref{eq:Dynamicl system}.

If the dynamical system \eqref{eq:Dynamicl system} is linear, one can  express it as
\begin{align}
    \begin{cases}
    \dot{\h x} (t) = A \h x(t) \\
    \h x(0) = \h x_{0},
    \end{cases}\label{eq:linear system}
\end{align}
where $A$  is a finite-dimensional time invariant matrix. There is a closed-form solution of \eqref{eq:linear system} given by Duhamel's formula \cite{teschl2012ordinary}, i.e.,
\begin{align}
    \h x(t) = \exp(tA)\h x_{0}. 
    \label{eq:duhamel}
\end{align}
Motivated by the fact that a linear dynamical system has a closed-form solution, we propose to employ the adaptive spectral Koopman (ASK) method \cite{li2023adaptive} that robustly linearizes an arbitrary dynamical system with its steady state serving as a critical point of the corresponding optimization problem. 

Before presenting our approach, we highlight the differences between solving ODEs and optimization problems. First, unlike ODE evolution, which closely tracks trajectories, optimization algorithms target the steady state—local minima for minimization problems and saddle points in min-max problems. This distinction affects parameter selection, such as the order/level of approximation (i.e., $N$) and length of evolution time (i.e., $T$). The choice of these parameters in optimization is more flexible than in ODEs, as the optimization focuses on the long-term behaviors of the system rather than precise trajectory tracking. This flexibility allows for a rough approximation of the dynamics in a neighborhood of the current state using mostly linear functions instead of higher-order polynomials. Second, it enables us to extend the time window for numerical integration without being overly concerned about errors introduced by the truncation in the finite-dimensional approximation.
Our approach takes a distinct perspective, aiming to stably evolve the system towards equilibrium in the long term.




In \Cref{subsec:ASK_solution_construction}, we describe our workflow of finding a steady state of the dynamical system \eqref{eq:Dynamicl system} via ASK, highlighting important steps discussed in \Cref{sect:ASKreview} and our modifications. Theoretical analysis on error bounds is conducted in \Cref{sect:anal}.

\subsection{Workflow}
\label{subsec:ASK_solution_construction}

We consider each component of the dynamics $\h u$ in \eqref{eq:Dynamicl system} as an observable $g$ that defines the Koopman operator \eqref{eq:koopman}, thus leading to a system of observables \eqref{eq:sys-obs}. To leverage the properties of the Koopman operator, 
 we consider its finite-dimensional approximation \eqref{eqn:trun_expansion}  and hence the solution of \eqref{eq:Dynamicl system} can be obtained by a linear combination of the approximated eigenfunctions $\{\varphi_j^N\}$ with corresponding eigenvalues $\{\tilde{\lambda}_j\}$. As $t\rightarrow \infty,$ the solution $\h x(t)$ given by \eqref{eqn:sol} approaches a critical point to the optimization problem \eqref{eq:setup}. We choose $t=T$ for a sufficiently large value of $T$. 

However, the accuracy of the solution \eqref{eqn:sol} may deteriorate due to the finite-dimensional approximation of $\mathcal{K}$ and the local approximation of $\varphi$ in the vicinity of $\bm x_0$, which is notably pronounced in systems characterized by highly nonlinear dynamics. To mitigate this issue, we periodically update 
an acceptable range for each component
\begin{equation}
R_i := \left[L_i , U_i \right],
\label{eq:acceptable range}
\end{equation}
where $L_i$ and $U_i$ symbolize the lower and upper bounds, respectively, and $r$ denotes a flexible radius parameter. The interval $[L_i, U_i]$ determines the construction of grid points $\{\bm \xi_l \}_{l=1}^N$ used in \eqref{eqn:sparse_grid_interp}. In particular, we construct the \textit{sparse} grid points \cite{li2022sparse} on the reference domain $[-1, 1]$ to define the matrices $\{\tensor
G_i\}$ in \eqref{eqn:Gi} and $\{\tensor \Xi_i\}$ in \eqref{eqn:XiandC}, which can then be rescaled to  match the real domain $[L_i, U_i]$ by $\frac{2 \tensor G_i}{U_i - L_i}$ and  $\frac{U_i - L_i}{2} (\tensor \Xi_i + 1) + L_i$, respectively, for $i=1, \cdots, d.$ By construction,
the central point of the domain is the first generated sparse grid point, e.g.,  $(0,0,\dotsc,0) = \bm \xi_1$ for a multidimensional domain $[-1, 1]^d$. 
In order to avoid the situation of $\bm x_0 \notin \{\bm \xi_l\}$, we consider a neighborhood of $\bm x_0$ defined by $[\bm x_0 - \bm r, \bm x_0 + \bm r]$, where $\bm r = (r_1, r_2, \dotsc, r_d)\trans$ is the radius of the neighbor so that the center of the neighborhood is always within the sparse grid.
For simplicity, we adopt the isotropic setting, i.e., $r_1 = r_2 = \cdots = r_d = r$.

Initially, we set $L_i = x_i - r$ and $U_i = x_i + r$. 
For the current state, denoted as $\bm x(t) = \left( x_1(t), x_2(t), \dotsc, x_d(t) \right)\trans$, we define the neighborhood $R_1 \times R_2 \times \dotsc \times R_d$ as valid if all components $x_i(t)$ fall within their respective ranges $R_i$. However, should any component lie outside its range, i.e., $x_i(t) \notin R_i$, we implement a straightforward retraction procedure by halving $t$ i.e. $t=\frac{t}{2}$.
%
Our choice for the retraction by $t=\frac{t}{2}$ is primarily motivated by its simple implementation, but other methods are worth exploring. 
For instance, we can leverage bifurcation concepts to extend \(T\) until it approaches the boundary of one of the intervals \(R_i\). This approach harnesses the full potential of the \comm{local dynamics } dynamics in the neighborhood, but it does not guarantee improved performance compared to a straightforward bifurcation approach. This is because the error introduced by numerical integration tends to be larger at the boundaries of the interval, and the range of feasible parameters is narrower in such cases.




By setting the end state of the current iteration as the initial condition to the next one,  we rescale $\{\tensor
G_i\}$  and $\{\tensor \Xi_i\}$ that are defined on $[-1, 1]$ by
$\frac{2 \tensor G_i}{U_i - L_i}$ and  $\frac{U_i - L_i}{2} (\tensor \Xi_i + 1) + L_i$ to deal with the new interval $[L_i, U_i]$, thus getting a new solution \eqref{eqn:sol} after eigendecomposition and adjustment to make the obtained solution within the bounds $[L_i, U_i]$. We stop the iterations until a stopping criterion is met or the maximum iteration number is achieved. The overall algorithm is summarized in~\cref{algo:ASK_pseudo_code}.




\begin{algorithm}[t]
    \caption{Sparse Grid Adaptive Spectral Koopman Method}
    \label{algo:ASK_pseudo_code}
    \begin{algorithmic}[1]
        \REQUIRE $n, T, \bm x_{0}, r, \kappa, \epsilon$
        \STATE {Set $k=0, \h x^{(0)} = \h x_0$.}
        \STATE{Generate sparse grid points $\{\bm \xi_l \}_{l=1}^N$}
        \WHILE{$k<n$ or $\|\h u(\bm x^{(k)})\| > \epsilon$}
            \STATE{Let $L_i=x_{i}-r, U_i = x_{i}+r$, set neighborhood $R_i$ as $R_i=[L_i, U_i]$ for $i = 1,2,...,d$.}
            \STATE{Compute $\tensor G_i$ at $\h x^{(k)}$ in \eqref{eqn:Gi} to obtain $\tensor U = \sum_i^d \: \text{diag}{\big( g_i(\bm \xi_1), \dotsc, g_i(\bm \xi_N) \big)} \: \tensor G_i$.}
            \STATE{Given $\tensor M$ in \eqref{eqn:sparse_grid_interp} and $\tensor U$, apply eigen-decomposition $\tensor U \tensor W = \tensor M \tensor W \tensor \Lambda$ to get $\tensor \Phi^N = \tensor M \tensor W$.}
            \STATE{Solve linear system $\tensor \Phi^N \tensor C = \tensor \Xi$, where $\tensor \Xi$ is defined in \eqref{eqn:XiandC}.}
            \STATE{Let $t = T$.}
            \STATE{Let $\nu_j$ be the first element of the $j$th column of $\tensor \Phi$. Construct solution at $t$ as $$\bm x(t) = \displaystyle\sum_j \tensor C(j, :) \nu_j \exp\left({\tilde \lambda_j (t)}\right),$$ where $\tensor C(j, :)$ is the $j$th row of $\tensor C$ and $\tilde \lambda_j$ is the $j$th eigenvalue of $\tensor\Lambda.$}
            \WHILE{$x_i(t) \notin R_i$ for any $i$}
                \STATE{ Set t = t/2.}
                \STATE{Re-evaluate  $\bm x(t) = \displaystyle\sum_j \tensor C(j, :) \nu_j \exp\left({\tilde \lambda_j (t)}\right)$.}       
            \ENDWHILE
            \STATE Evolve the current solution to the time $t$, i.e., $\h x^{(k)} = \h x(t)$
            \STATE $k = k+1$.
        \ENDWHILE
        \RETURN $\bm x^{(k)}$
    \end{algorithmic}
\end{algorithm}

Note that we approximate the eigenfunctions using multi-dimensional orthogonal polynomials through the spectral method. Consequently, the Koopman operator's generator is discretized as a differentiation matrix. Similar approaches have been proposed for studying the behavior of differential equations~\cite{breda2006pseudospectral, breda2012approximation, froyland2013estimating}. Furthermore, these eigenfunctions can be constructed using various approximation approaches. For example, they can be built based on the kernel of a reproducing kernel Hilbert space~\cite{das2020koopman, giannakis2019data} in the data-driven setting. A more in-depth exploration of the properties of the Koopman operator and its applications to differential equations can be found, e.g., in~\cite{mezic2005spectral, arbabi2017ergodic,korda2020data, nakao2020spectral, nathan2018applied, brunton2021modern, williams2015data, williams2016extending, dietrich2020koopman, balabane2021koopman}. Certain methods discussed therein hold the potential to enhance the efficiency and accuracy of our approach, and these will be incorporated into our future work.

\subsection{Numerical Analysis}\label{sect:anal}

In general, conducting a comprehensive numerical analysis of the ASK-based optimization
algorithm is a complex task. The error in the ASK expansion for a dynamical system
includes the following  three components:
\begin{enumerate}
\item Errors in discretizing the Koopman generator $\mathcal{K}$.
Despite being a linear operator, it is noteworthy that the eigenvalues of $\mathcal K$ 
only constitute a discrete spectrum. Consequently, when we employ a matrix $\mathbf{K}$ to approximate $\mathcal{K}$, 
we may lose information in the continuous spectrum of $\mathcal{K}$ beyond the errors introduced in the approximation of eigenvalues.
\item Errors in approximating global functions locally. 
The eigenfunction $\varphi_j$ is defined on the entire state space, but in computation, we approximate it within a bounded domain. Consequently, employing local information to approximate global functions, such as eigenfunctions and Koopman modes, can result in inaccuracies. This is the rationale behind the introduction of the adaptivity step in the design of ASK. Through this step, we aim at a more accurate approximation of global functions $\varphi_j$ across various bounded domains within the state space.
\item Errors in approximating eigenfunctions $\varphi_j$ using polynomials. We
employ the standard spectral method to approximate eigenfunctions $\varphi_j$ using polynomials
$\varphi_j^N$ within a bounded domain, a practice that may result in errors in the approximation.
Similarly, alternative approximation approaches, such as radial basis function 
approximation, Fourier series approximation, and wavelet approximation, can 
introduce various numerical errors. 
\end{enumerate}
In this work, we focus on addressing the third type of error mentioned earlier, specifically investigating the approximation in the vicinity of a critical point by using our algorithm. 
 Our objective is to leverage numerical analysis tools in both the spectral method and the sparse grids method to provide error estimates, which allows us to gain a preliminary understanding of the convergence of the proposed method, as outlined in \Cref{thm:ask_opt}.

\begin{theorem}
  \label{thm:ask_opt}
Consider a  (truncated) Koopman expansion of a scalar function $g\in\mathbb R^d\rightarrow \mathbb R$, i.e.,
\[
g(\bm x(t_0+t)) = \sum_{j=0}^{N} c_{j} \varphi_{j}(\h x(t_{0})) \exp(\lambda_{j}t),
\]
 and its ASK approximation $\sum_{j=0}^{N} \tilde{c}_{j} \varphi^{N}_{j}(\h
    x(t_{0})) \exp (\tilde\lambda_{ j } t )$ in a bounded neighborhood of $\bm
    x(t_0)$, denoted as $B_{\bm x(t_0)}(r):=\{\bm y\in \mathbb{R}^d
    | \Vert \bm x(t_0)-\bm y\Vert_{\infty} \leq r\}$,
  where eigenpairs $(\varphi_j, \lambda_j)$ are approximated by
    $(\varphi_j^N,\tilde\lambda_j)$.
   Define $\varepsilon_{\varphi}=\max_{0\leq j\leq N} \{\Vert \varphi_j-\varphi_j^N\Vert_{B_{\bm
    x(t_0)}(r),\infty}\}$,
    $\varepsilon_{\lambda}=\max_{0\leq j\leq N}\{\vert\lambda_j-\tilde{\lambda}_j\vert \}$, and the matrix $\tensor\Phi$ with each element $\Phi_{ij}=\varphi_j(\bm \xi_i)$ with its condition number denoted by $\kappa.$
  If $\lambda_j$ and $\tilde \lambda_j$ have nonpositive real part, 
    $\operatorname{Re}(\tilde\lambda_j)<0$ for $\operatorname{Re}(\lambda_j)<0$, and $\kappa(N+1) \varepsilon_{\varphi}<\Vert\tensor\Phi\Vert_{\infty}$,    we 
    have: 
    \begin{equation}
      \label{eq:ask_bound}
    \left\Vert \sum_{j=0}^{N} c_{j} \varphi_{j}(\bm x(t_{0})) \exp(\lambda_{j}t)
    - \sum_{j=0}^{N} \tilde{c}_{j} \varphi^{N}_{j}(\bm x(t_{0})) \exp
      (\tilde\lambda_{ j } t ) \right\Vert \lesssim C_1 \varepsilon_{\lambda} t +
      C_2\varepsilon_{\varphi},
    \end{equation}
    where both $C_1$ and $C_2$ depends on $\Vert\bm c\Vert_{\infty}$ and $\max_j
    |\varphi_j(x(t_0))|$ with $C_2$ additionally dependent on condition number of $\tensor\Phi$ and $N$.
\end{theorem}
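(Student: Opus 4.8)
The plan is to decompose the error term by term so that each of the three ingredients $\varepsilon_\lambda$, $\varepsilon_\varphi$, and the Koopman-mode discrepancy $\Vert\bm c-\tilde{\bm c}\Vert$ appears in isolation, and then to control the mode discrepancy by a perturbation estimate for the linear system that defines the modes. Writing $a_j:=c_j\varphi_j(\bm x(t_0))e^{\lambda_j t}$ and $\tilde a_j:=\tilde c_j\varphi_j^N(\bm x(t_0))e^{\tilde\lambda_j t}$, I would first use the telescoping identity
\[
a_j-\tilde a_j = c_j\varphi_j(\bm x(t_0))\left(e^{\lambda_j t}-e^{\tilde\lambda_j t}\right)+e^{\tilde\lambda_j t}\left(c_j\varphi_j(\bm x(t_0))-\tilde c_j\varphi_j^N(\bm x(t_0))\right),
\]
sum over $j=0,\dots,N$, and apply the triangle inequality, so that the left side of \eqref{eq:ask_bound} is at most the sum of an ``eigenvalue part'' and a ``coefficient part.''

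For the eigenvalue part, I would use that $\operatorname{Re}(\lambda_j)\le 0$ and $\operatorname{Re}(\tilde\lambda_j)\le 0$ imply $|e^{zt}|\le 1$ for $t\ge 0$ and every $z$ on the segment joining $\lambda_j$ to $\tilde\lambda_j$, since $\operatorname{Re}(z)$ is then a convex combination of two nonpositive numbers. Writing $e^{\lambda_j t}-e^{\tilde\lambda_j t}=(\lambda_j-\tilde\lambda_j)t\int_0^1 e^{(\tilde\lambda_j+s(\lambda_j-\tilde\lambda_j))t}\,\dif s$ then gives $|e^{\lambda_j t}-e^{\tilde\lambda_j t}|\le\varepsilon_\lambda t$, and summing the bound $|c_j\varphi_j(\bm x(t_0))|\,\varepsilon_\lambda t\le\Vert\bm c\Vert_\infty\max_j|\varphi_j(\bm x(t_0))|\,\varepsilon_\lambda t$ over the $N+1$ indices produces the $C_1\varepsilon_\lambda t$ term. (The extra hypothesis that $\operatorname{Re}(\tilde\lambda_j)<0$ whenever $\operatorname{Re}(\lambda_j)<0$ is not needed for this inequality; it is what ensures the approximate trajectory still decays to the equilibrium, and is consistent with the setup.)

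For the coefficient part I would further split $c_j\varphi_j(\bm x(t_0))-\tilde c_j\varphi_j^N(\bm x(t_0))=c_j(\varphi_j(\bm x(t_0))-\varphi_j^N(\bm x(t_0)))+(c_j-\tilde c_j)\varphi_j^N(\bm x(t_0))$ and use $|e^{\tilde\lambda_j t}|\le 1$. The first summand is bounded by $\Vert\bm c\Vert_\infty\varepsilon_\varphi$, and since $\bm x(t_0)\in B_{\bm x(t_0)}(r)$ we have $|\varphi_j^N(\bm x(t_0))|\le\max_j|\varphi_j(\bm x(t_0))|+\varepsilon_\varphi$. The remaining task — the only genuinely delicate step — is to bound $\Vert\bm c-\tilde{\bm c}\Vert$. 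Both the exact modes and the ASK modes solve interpolation systems with the \emph{same} right-hand side $\bm b=(g(\bm\xi_1),\dots,g(\bm\xi_N))\trans$: evaluating the (by hypothesis exact) expansion $g=\sum_{j=0}^N c_j\varphi_j$ at the grid points gives $\tensor\Phi\bm c=\bm b$, while the ASK construction (the solve following \eqref{eqn:generalized_eig}--\eqref{eqn:XiandC}) gives $\tensor\Phi^N\tilde{\bm c}=\bm b$, where $\tensor\Phi^N$ has entries $\varphi_j^N(\bm\xi_i)$. Since every entry of $\tensor\Phi^N-\tensor\Phi$ equals $\varphi_j^N(\bm\xi_l)-\varphi_j(\bm\xi_l)$ with $\bm\xi_l\in B_{\bm x(t_0)}(r)$, we get $\Vert\tensor\Phi^N-\tensor\Phi\Vert_\infty\le(N+1)\varepsilon_\varphi$, so that $\Vert\tensor\Phi^{-1}\Vert_\infty\Vert\tensor\Phi^N-\tensor\Phi\Vert_\infty\le\kappa(N+1)\varepsilon_\varphi/\Vert\tensor\Phi\Vert_\infty<1$ exactly by the stated hypothesis. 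The classical perturbation bound for $\tensor\Phi\bm c=\bm b=(\tensor\Phi+\Delta\tensor\Phi)\tilde{\bm c}$ then yields
\[
\Vert\bm c-\tilde{\bm c}\Vert_\infty\le\frac{\kappa(N+1)\varepsilon_\varphi/\Vert\tensor\Phi\Vert_\infty}{1-\kappa(N+1)\varepsilon_\varphi/\Vert\tensor\Phi\Vert_\infty}\,\Vert\bm c\Vert_\infty\lesssim\kappa(N+1)\varepsilon_\varphi\Vert\bm c\Vert_\infty.
\]
Multiplying by $|\varphi_j^N(\bm x(t_0))|$, summing over $j$, and combining with the $\Vert\bm c\Vert_\infty\varepsilon_\varphi$ contributions gives a bound of the form $C_2\varepsilon_\varphi$, with $C_2$ depending on $\Vert\bm c\Vert_\infty$, $\max_j|\varphi_j(\bm x(t_0))|$, the condition number $\kappa$, and $N$. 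Adding the two parts yields \eqref{eq:ask_bound}.

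I expect the main obstacle to be precisely this coefficient estimate: one must recognize that $\bm c$ and $\tilde{\bm c}$ solve linear systems differing only in the matrix and not in the data, so that their discrepancy is a pure matrix perturbation, and then verify that the perturbation size $(N+1)\varepsilon_\varphi$ together with the conditioning $\kappa$ enters the standard bound in exactly the way that the hypothesis $\kappa(N+1)\varepsilon_\varphi<\Vert\tensor\Phi\Vert_\infty$ is designed to make admissible (keeping $\tensor\Phi^N$ invertible and the relative error finite). Everything else reduces to the triangle inequality and the elementary estimate $|e^{zt}|\le 1$ for $\operatorname{Re}(z)\le 0$, $t\ge 0$; the $(N+1)$ factors coming from summing over the $N+1$ modes and from passing between $\ell^1$ and $\ell^\infty$ norms of $\bm c-\tilde{\bm c}$, as well as the factor $\Vert\tensor\Phi\Vert_\infty^{-1}$, are absorbed into the constants $C_1$ and $C_2$.
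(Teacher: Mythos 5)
Your proposal is correct, and its two main ingredients---the telescoping decomposition of each term $c_j\varphi_j e^{\lambda_j t}-\tilde c_j\varphi_j^N e^{\tilde\lambda_j t}$ and the matrix-perturbation bound for $\Vert\bm c-\tilde{\bm c}\Vert_\infty$ obtained from $\tensor\Phi\bm c=\bm g$ versus $\tensor\Phi^N\tilde{\bm c}=\bm g$ with $\Vert\tensor\Phi-\tensor\Phi^N\Vert_\infty\le(N+1)\varepsilon_\varphi$---are exactly the ones the paper uses. The difference is in how the spectrum is partitioned. The paper splits the indices into $S=\{j:\operatorname{Re}(\lambda_j)<0\}$ and its complement, applies the telescoping argument only on $S^c$ (where $\operatorname{Re}(\lambda_j)=0$), and disposes of the $S$-part by noting that both the exact and approximate contributions decay like $e^{\lambda_{\max}t}$ and $e^{\tilde\lambda_{\max}t}$, so that part is driven below any prescribed $\epsilon$ by taking $t$ large; this is where the hypothesis $\operatorname{Re}(\tilde\lambda_j)<0$ whenever $\operatorname{Re}(\lambda_j)<0$ is actually used. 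You instead treat all $N+1$ modes uniformly and bound the exponential difference by $\vert e^{\lambda_j t}-e^{\tilde\lambda_j t}\vert\le\varepsilon_\lambda t$ via the integral representation along the segment joining the two eigenvalues, using only $\operatorname{Re}\le 0$. Your route is arguably cleaner: it yields the right-hand side of \eqref{eq:ask_bound} for every $t\ge 0$ rather than only after the strictly decaying modes have died out, it replaces the paper's informal ``$\vert 1-\exp((\tilde\lambda_j-\lambda_j)t)\vert=\mathcal O(\varepsilon_\lambda t)$ when $\vert\tilde\lambda_j-\lambda_j\vert$ is sufficiently small'' by a genuine inequality, and your parenthetical observation that the sign hypothesis on $\operatorname{Re}(\tilde\lambda_j)$ becomes unnecessary is accurate. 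What the paper's split buys in exchange is the qualitative information that the contribution of the strictly stable modes vanishes exponentially as $t\to\infty$, which is the regime the optimization algorithm actually operates in.
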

\begin{proof}
 The Koopman modes $c_j$  satisfy $\tensor \Phi\bm c=\bm g$ with $g_i=g(\bm \xi_i)$. Since $\varphi_j$ are approximated by
  $\varphi_j^N$, the perturbation of $\bm\Phi$, defined as $\delta\tensor\Phi=\tensor\Phi-\tensor\Phi^N$, satisfies $\Vert\delta\bm\Phi
  \Vert_{\infty}\leq (N+1) \varepsilon_{\varphi}$.  Recall that $\tensor\Phi^N\tilde{\bm c}=\bm g$, and hence
we get
\begin{equation}
    \dfrac{\Vert \bm c-\tilde{\bm c} \Vert_{\infty}}{\Vert\bm c
  \Vert_{\infty}}\leq\dfrac{\kappa\Vert\delta\tensor\Phi \Vert_{\infty} }{\Vert\tensor\Phi\Vert_{\infty}
  -\kappa\Vert\delta\tensor\Phi \Vert_{\infty} } \leq
  \dfrac{\kappa (N+1)\varepsilon_{\varphi}}{\Vert \tensor \Phi\Vert_{\infty} -
  \kappa (N+1)\varepsilon_{\varphi}}:=\varepsilon_{\infty},
\end{equation}
where we use the assumption that   $\kappa(N+1)\varepsilon_{\varphi} < \Vert\bm\Phi \Vert_{\infty}$.
 If
  $\varepsilon_{\varphi}$ is sufficiently small  such that $\kappa(N+1)\varepsilon_{\varphi}\ll \Vert \tensor\Phi\Vert_{\infty}$, then $\varepsilon_{\infty}\approx
  \frac{\kappa(N+1)}{\Vert \tensor\Phi \Vert_{\infty}}\varepsilon_{\varphi}$. Define a set $S=\{j\in \{0, 1, \cdots,
  N\}|\operatorname{Re}(\lambda_j)<0\}$ and $S^c = \{0, 1,\cdots, N\}\backslash S$.  By the assumption that 
  $\operatorname{Re}(\lambda_j)$ and $\operatorname{Re}(\tilde\lambda_j)$ are
  nonpositive, we have
  \begin{equation}
    \label{eq:estimate_part1}
   \begin{aligned}
    & \left\Vert \sum_{j\in S} c_{j} \varphi_{j}(\bm x(t_{0})) \exp(\lambda_{j}t)
    - \sum_{j\in S} \tilde{c}_{j} \varphi^{N}_{j}(\bm x(t_{0})) \exp
      (\tilde\lambda_{ j } t ) \right\Vert \\
      \leq & (N+1)\left(
      \Vert \bm c\Vert_{\infty}\exp(\lambda_{\max} t) +  
      \Vert \tilde{\bm c}\Vert_{\infty}\exp(\tilde\lambda_{\max} t)\right) \\
      \leq & (N+1)\Vert\bm c\Vert_{\infty}\left( \exp(\lambda_{\max} t) +
      (1+\varepsilon_{\infty}) \exp(\tilde\lambda_{\max} t)\right),
  \end{aligned}
  \end{equation}
 where $\lambda_{\max}=\max_{j\in S} \{\operatorname{Re}(\lambda_j)\}<0$ and
  $\tilde\lambda_{\max}=\max_{j\in S} \{\operatorname{Re}(\tilde\lambda_j)\}<0$.
  Therefore, the magnitude of the difference in \eqref{eq:estimate_part1} converges to zero exponentially as $t\rightarrow\infty$.

  By the assumption that $\operatorname{Re}(\tilde\lambda_j)<0$ for $\operatorname{Re}(\lambda_j)<0$, we have 
  $\operatorname{Re}{\tilde\lambda_j}\leq \operatorname{Re}{\lambda_j}=0$ for $j\in S^c$, which implies that
    \[ \begin{aligned}
    & \left\vert c_{j} \varphi_{j}(\bm x(t_{0})) \exp(\lambda_{j}t)
    - \tilde{c}_{j} \varphi^{N}_{j}(\bm x(t_{0})) \exp
      (\tilde\lambda_{ j } t ) \right\vert \\
      \leq & \big\vert c_{j} \varphi_{j}(\bm x(t_{0})) \exp(\lambda_{j}t)
    - c_{j} \varphi_{j}(\bm x(t_{0})) \exp(\lambda_{j}t)\exp ((\tilde\lambda_{ j
    }-\lambda_j) t ) \\
    &+ 
    (c_{j} \varphi_{j}(\bm x(t_{0}))- \tilde{c}_{j} \varphi^{N}_{j}(\bm
    x(t_{0}))) \exp (\tilde\lambda_{ j } t ) \big\vert \\
    \leq & \left\vert c_{j} \varphi_{j}(\bm x(t_{0}))
    \exp(\lambda_{j}t)\right\vert\cdot\left\vert 1- \exp ((\tilde\lambda_{ j
    }-\lambda_j) t ) \right\vert + 
    \vert c_j\varphi_j(t_0) - c_j\varphi_j^N(t_0) \vert  + 
    \vert c_j\varphi_j^N(t_0) -\tilde c_j\varphi_j^N(t_0) \vert   \\
    \leq & \left\vert c_{j} \varphi_{j}(\bm x(t_{0})) \right\vert\cdot
    \left\vert 1- \exp ((\tilde\lambda_{ j }-\lambda_j) t ) \right\vert  + 
    \Vert \bm c\Vert_{\infty}\varepsilon_{\varphi}  + 
    \Vert\bm c \Vert_{\infty}\varepsilon_{\infty} (\max\{\varphi_j(t_0)\} +
    \varepsilon_{\varphi})   .
    \end{aligned}\]
 Therefore, when $\vert \tilde{\lambda}_j-\lambda_j \vert$ is sufficiently small, we have
  $\left\vert 1- \exp ((\tilde\lambda_j-\lambda_j) t ) \right\vert=
  \mathcal{O}(\varepsilon_{\lambda}t)$.
  In practice, 
  for a given small number $\epsilon$, we
identify  $t$ such that the difference in \cref{eq:estimate_part1} is upper bounded by
  $\epsilon$, thereby establishing the validity of the desired bound in \eqref{eq:ask_bound}.
\end{proof}

  Of note, 
  in~\cref{thm:ask_opt}, it is a requirement that
  $(\varphi_j^N,\tilde\lambda_j)$ serves as an accurate approximation of $(\varphi_j, \lambda_j)$. Utilizing the spectral method, we generally observe that when eigenfunctions $\varphi_j$  exhibit good regularity---meaning high-order (mixed) derivatives exist and are bounded---we can anticipate a high level of accuracy in the approximation, as shown in the study of eigenvalue problems by using the spectral method. However, the majority of such works are associated with self-adjoint operators under specific boundary conditions, a setup that differs from the one in ASK. Therefore, a comprehensive error estimate requires careful investigation and will be addressed in future work. In the ideal case, we may expect
  $\varepsilon_{\lambda}$ and $\varepsilon_{\varphi}$ to be at the order of
  $\mathcal{O}(n^{-k})$ with the accuracy of interpolation based on sparse
  grid~\cite{barthelmann2000high}, where $n$ is the total number of sparse grids
  points and $\varphi_j\in C^k(B_{\bm x(t_0)}(r))$. 
\section{Numerical Results}\label{sect:experiment}
We present extensive numerical examples to
demonstrate the performance of the proposed ASK method for solving minimization problems \eqref{eq:setup} in Section~\ref{sect:exp-min} and min-max problems \eqref{eq:minmax setup} in Section~\ref{sect:exp-minmax}.  All of our numerical experiments are conducted using 100 randomly chosen initial points. We assess the efficiency of each competing algorithm based on both $\|\nabla f (\h x)\|$ and time consumption. For convex test functions, we include the comparison to the CVX package~\cite{gb08,cvx}, using the error to the global minimum as an additional evaluation metric. 
All the experiments are performed in MATLAB 2022a. We refer to a textbook  \cite{watkins2004fundamentals} to implement these baseline algorithms in MATLAB. No other software or public libraries were used in our experiments. The computing platform is a personal computer with Ryzen 7 5800x, 32 GB of RAM, and the operating system of Windows 11.

For the proposed method, we adopt the maximum iteration  (i.e., adaptivity steps) number of $5 \times 10^4$. There are two major parameters that need to be tuned: one is the radius $r$ of the acceptable range defined in \eqref{eq:acceptable range}, and the other is the maximum degree of basis polynomial function $\Psi,$ which we refer to as the level of approximation. The acceptable range $r$ is set to $10^{-1}$ and the level of approximation in the sparse grids method is set to $1$ if not mentioned otherwise. In general, for level $l$, we generate $2^l+1$ one-dimensional quadrature points and then use them to construct high-dimensional sparse grids points based the Smolyak structure. A method is considered to have successfully converged if it achieves a gradient norm $\|\nabla f (\h x)\|$ less than $10^{-6}$. The reported results reflect the average outcomes over all such successful trials.



\subsection{Minimization problems}\label{sect:exp-min}
We investigate bowl-shaped functions in \Cref{sect:Bowl-Shaped}
and valley-shaped functions in \Cref{sect:valley-shaped} focusing on 2-dimensional functions. In addition, 
two higher-dimensional examples are discussed in \Cref{sect:HD}. All these functions are widely used in optimization for testing purposes. We compare the proposed ASK method with three baseline approaches: gradient descent (GD) in \eqref{eq:gradient descend}, heavy ball (HB) in \eqref{eq:grad momentum}, and Nesterov’s accelerated gradient (NAG) in \eqref{eq:Nest accel grad}.

\subsubsection{Bowl-shaped testing functions}\label{sect:Bowl-Shaped}

\begin{figure}
    \centering
    \subfigure[Trajectory of ASK method]{
    \includegraphics[width=0.4\textwidth]{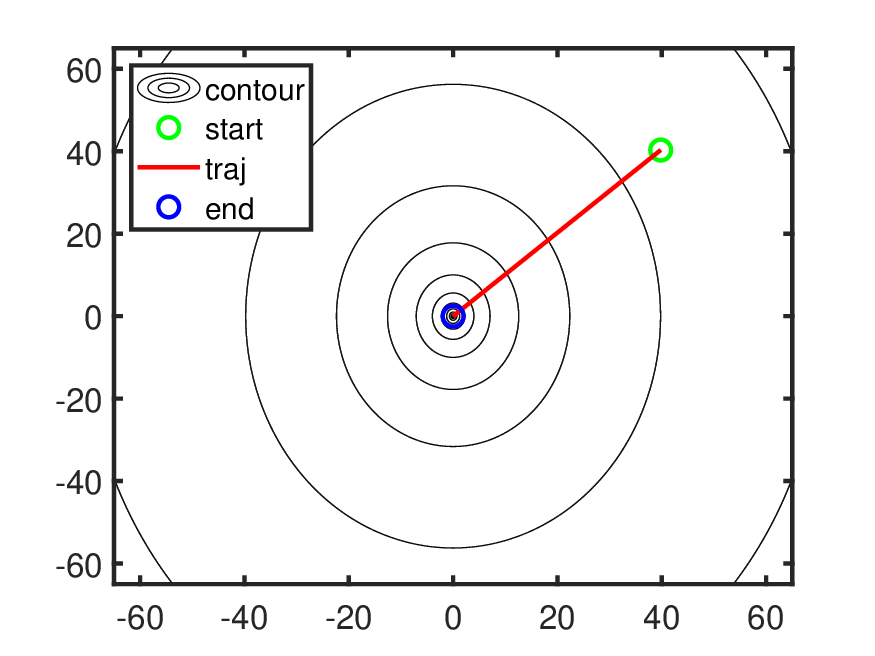}
    \label{fig:RothypTrace}}
    \subfigure[$\|\nabla f(\bm x^{(k)}) \|$ v.s.~$k$.]{
    \includegraphics[width=0.4\textwidth]{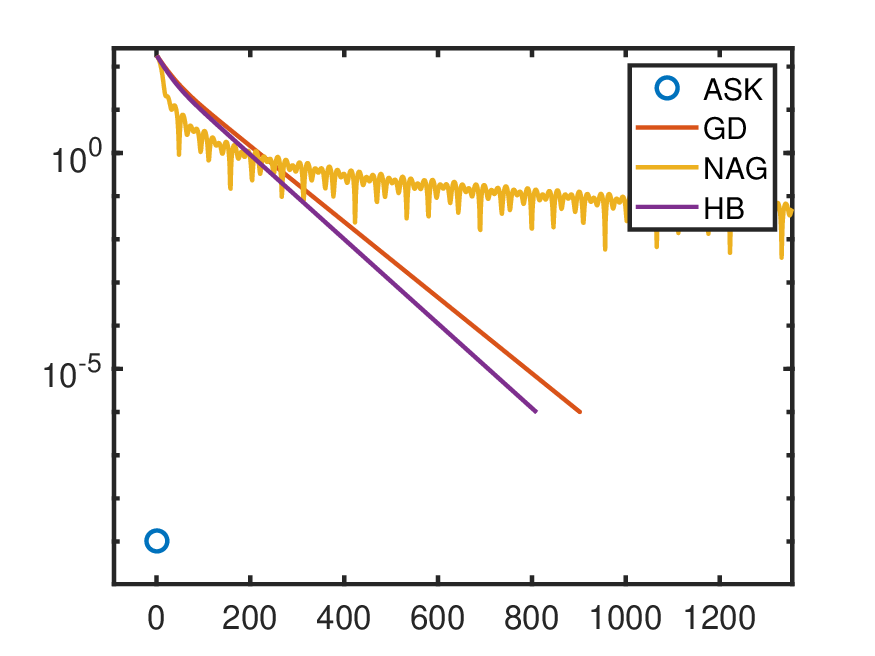}
    \label{fig:RothypGrad}}
    \caption{Algorithmic behaviors on Rotated Hyper-Ellipsoid function.
    As a semi-continuous algorithm, the proposed ASK method is able to reach the global optimum in one iteration.}
    \label{fig:Rothyp}
\end{figure}

\begin{table}
    \centering
    \begin{tabular}{cccc}
    \hline\hline
         Methods& $\|\nabla f\|$& Error & Time\\
         \hline
         ASK&1.0303e-09&3.6223e-10&4.8347e-03 \\
         GD&9.8228e-07&4.8131e-07&8.0462e-03 \\
         NAG&2.5607e-03&1.0887e-03&6.6670e-02 \\
         HB&9.8054e-07&4.7935e-07&6.4078e-03 \\
         CVX&3.8696e-28&9.6740e-29&2.3438e-01 \\
         \hline\hline
    \end{tabular}
    \caption{Rotated Hyper-Ellipsoid, init=$130\times\text{rand}(2,1)-65$.}
    \label{tab:Rothyp}
\end{table}

We start with the rotated Hyper-Ellipsoid function \cite{molga2016test}, which is defined by 
\begin{align}
    f(x_1, x_2) = \sum_{i=1}^{2}\sum_{j=1}^{i} x_{j}^{2}.
\end{align}
This function is convex and has a unique minimum of $f(\h x^{*}) = 0$ at $\h x^{*} = (0,0)$. 
An initial point is randomly chosen within the grid $[-65,65]\times[-65,65]$.
\Cref{fig:RothypTrace} illustrates a notable advantage of the proposed algorithm: 
it converges in just one iteration. 
This example showcases a scenario where local behaviours of the dynamics closely align with the global dynamics, enabling the identification of the global minimum in a single iteration. It also emphasizes the remarkable potential of our proposed method when applied in such circumstances.
Figure \ref{fig:RothypGrad} depicts the norm of the gradient at each iteration,  confirming once again that the ASK algorithm converges in a single iteration. In addition, NAG demonstrates faster convergence than GD and HB initially but slows down as it approaches the target location.
The quantitative comparison Table~\ref{tab:Rothyp} indicates that all competing algorithms successfully find the global minimum. Our proposed algorithm converges closer to the global minimum than the three gradient-based algorithms with the least amount of computational time. While the CVX package achieves the highest accuracy, it is also the slowest.

The second testing function  is the sum of different powers  \cite{molga2016test}, given by
\begin{align}
    f(x_1, x_2) = \sum_{i=1}^{2} |x_{i}|^{i+1}.
\end{align}
It  has a unique minimum of $f(\h x^{*}) = 0$ at $\h x^* = (0,0)$. Due to the presence of $|x_2|^3,$ the function is not differentiable. 
We choose an initial point randomly from $[-1,1]\times[-1,1]$. 
\Cref{fig:SumpowTrace} illustrates that the evolution of our proposed algorithm is relatively smooth, despite the function itself not being smooth, specifically, not differentiable.
Both \Cref{fig:Sumpow} and  \Cref{tab:Sumpow}  demonstrate that the proposed algorithm achieves the highest accuracy in terms of the norm of the gradient and error to the global solution with the least amount of computation time.  The result obtained by CVX is included, showing that CVX yields the exact solution but at the cost of a longer runtime. 


\begin{figure}
    \centering
    \subfigure[Trajectory of ASK method]{
    \includegraphics[width=0.4\textwidth]{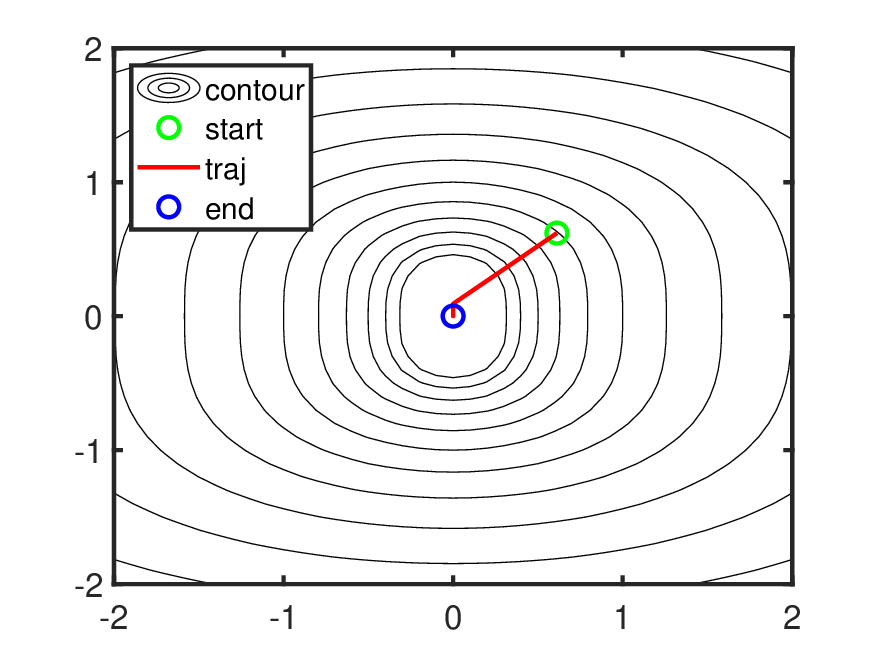}
    \label{fig:SumpowTrace}}
    \subfigure[$\|\nabla f(\bm x^{(k)}) \|$ v.s.~$k$.]{
    \includegraphics[width=0.4\textwidth]{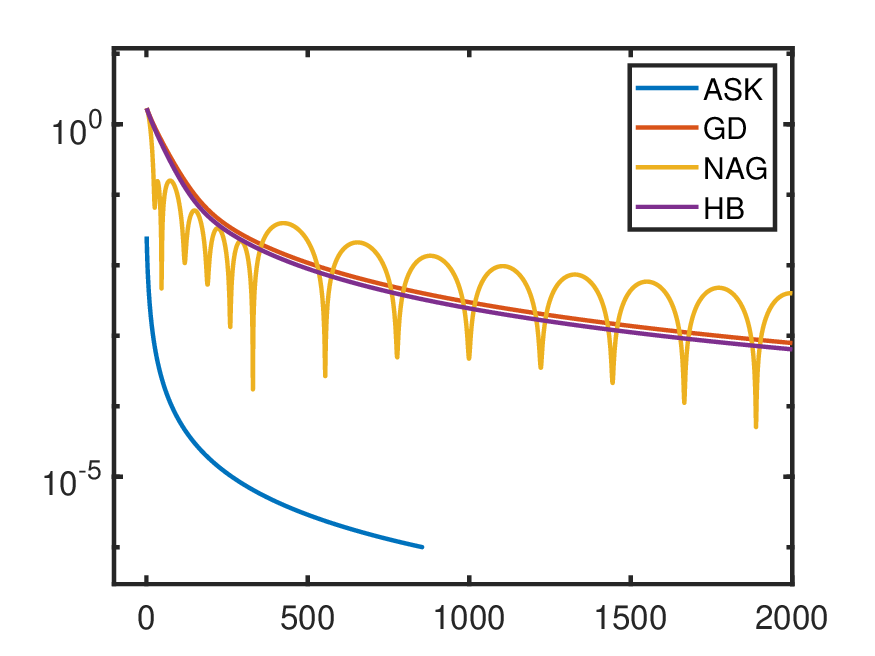}
    \label{fig:SumpowGrad}}
    \caption{Algorithmic behaviors on the sum of different powers function. 
    The proposed algorithm converges the fastest to the solution with the smallest norm of the gradient. Despite the objective function is not smooth, the evolution of ASK is rather smooth.}
    \label{fig:Sumpow}
\end{figure}

\begin{table}
    \centering
    \begin{tabular}{cccc}
    \hline\hline
         Methods& $\|\nabla f\|$& Error  & Time\\
         \hline
         ASK&9.9966e-07&5.7725e-04&7.8031e-02 \\
         GD&3.2950e-05&3.3138e-03&1.0769e-01 \\
         NAG&2.2653e-03&3.9177e-03&7.0358e-02 \\
         HB&2.6712e-05&2.9836e-03&7.0009e-02 \\
         CVX&0.0000e+00&0.0000e+00&4.3750e-01 \\
         \hline\hline
    \end{tabular}
    \caption{The sum of different powers function, init=$2\times\text{rand}(2,1)-1$.}
    \label{tab:Sumpow}
\end{table}

Lastly, we consider the second Bohachevsky function \cite{bohachevsky1986generalized}, labeled as Bohachevsky 2. It is defined by
\begin{align}
    f(x_1, x_2) = x_{1}^{2} + x_{2}^{2} - 0.3\cos(3\pi x_{1})\cos(4\pi x_{2}) + 0.3.
\end{align}
As illustrated in \Cref{fig:Bohachevsky2trace}, the function is non-convex and takes the form of a bowl with oscillatory level sets near the global minimum. Specifically, it features numerous local minima and a global minimum of $f(\h x^{*}) = 0$ at $\h x^{*} = (0,0)$. For this function, the level of approximation is set to 3 rather than the default 1 due to the intricate dynamics around the critical points. We randomly choose initial points in the domain $[-5,5]\times[-5,5]$. 
When an initial point is far from a local extreme cluster, the proposed algorithm converges to a local minimum. 

\Cref{fig:Bohachevsky2trace} displays the trace of the proposed ASK method on the contour plot, revealing a zigzag path when approaching a critical point. 
\Cref{fig:Bohachevsky2grad} compares the competing algorithms in terms of the norm of the gradient at each iteration, indicating that all the methods converge to a critical point except NAG. This is anticipated, as the NAG method may have adverse effects when applied to a non-convex function~\cite{wibisono2016variational}. The proposed ASK achieves the highest accuracy among the competing methods.  \Cref{tab:Bohachevsky2} confirms that ASK results in the smallest gradient norm, yet, unfortunately, it is the slowest due to the intricate \comm{local dynamics} dynamic in the neighbourhood that necessitate numerous eigen-decomposition calculations. This behavior is observed in \Cref{fig:Bohachevsky2grad}, where the magnitude of $\|\nabla f \|$ oscillates between $10^{-1}$ and $10^{1}$ before rapidly dropping below $10^{-10}$ after approximately 100 iterations.


\begin{figure}
    \centering
    \subfigure[Trajectory of ASK method]{
    \includegraphics[width=0.4\textwidth]{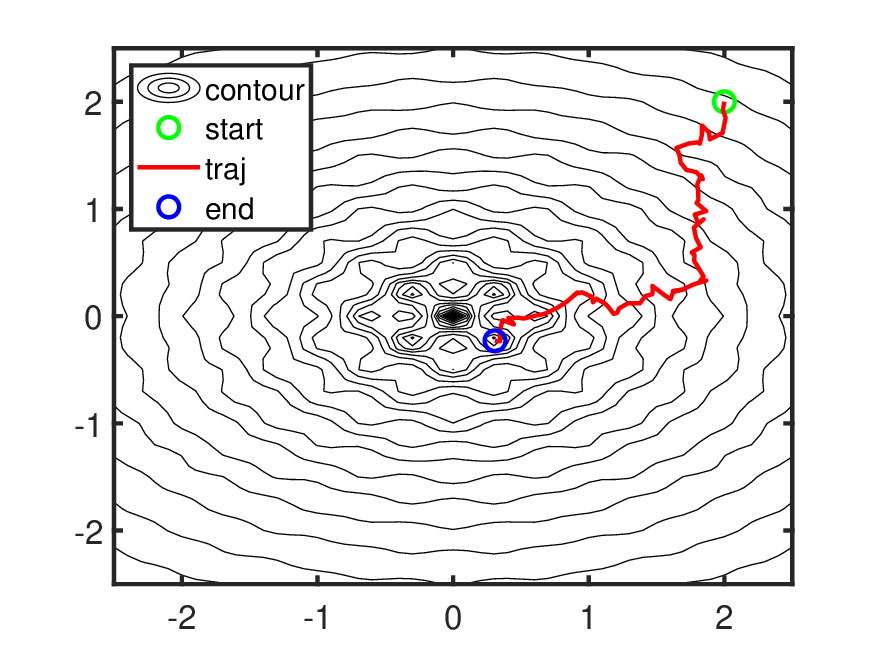}
    \label{fig:Bohachevsky2trace}
    }
    \subfigure[$\|\nabla f(\bm x^{(k)}) \|$ v.s.~$k$.]{\includegraphics[width=0.4\textwidth]{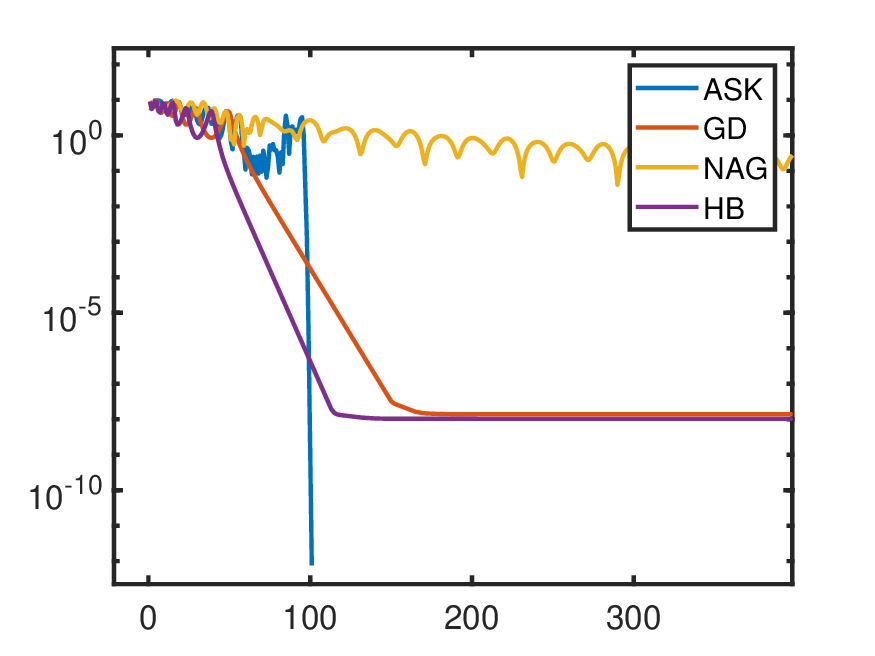}
   \label{fig:Bohachevsky2grad}
    }      
    \caption{Algorithmic behaviors on Bohachevsky 2 function. All the methods converge to a critical point except for NAG, and ASK achieves the highest accuracy in terms of the gradient norm.}
    \label{fig:Bohachevsky2}
\end{figure}

\begin{table}
    \centering
    \begin{tabular}{ccc}
    \hline\hline
         Methods & $\|\nabla f\|$ & Time \\
         \hline
         ASK & \textbf{3.7616e-14} & 1.1111e-01 \\
         GD & 5.2897e-09 & 2.1634e-03 \\
         NAG & 1.4387e-01 & 6.2574e-03 \\
         HB & 9.5994e-10 & \textbf{1.1356e-03} \\
    \hline\hline
    \end{tabular}
    \caption{Bohachevsky 2 function, init=$4\times\text{rand}(2,1)-2$.}
    \label{tab:Bohachevsky2}
\end{table}

\subsubsection{Valley-Shaped functions}\label{sect:valley-shaped}
The first valley-shaped testing function we consider is called the three-hump Camel function \cite{molga2016test}, defined by
\begin{align}\label{eq:camel3}
    f(x_1, x_2) = 2x_{1}^{2} - 1.05x_{1}^4+\frac{x_{1}^{6}}{6} + x_{1}x_{2} + x_{2}^{2}.
\end{align}
This nonconvex function has a global minimum of $f(\h x^{*}) = 0$ at $\h x^{*} = (0,0)$, two local minima, and two saddle points. 
The initial point is chosen randomly from $[-5,5]\times[-5,5]$. 
As shown in \Cref{fig:Camel3} and \Cref{tab:Camel3}, the proposed algorithm efficiently approaches a critical point in just a few iterations, making it the most time-effective. Additionally, it outperforms other competing methods in terms of the gradient norm. 


\begin{figure}
    \centering
    \subfigure[Trajectory of ASK method]{
    \includegraphics[width=0.4\textwidth]{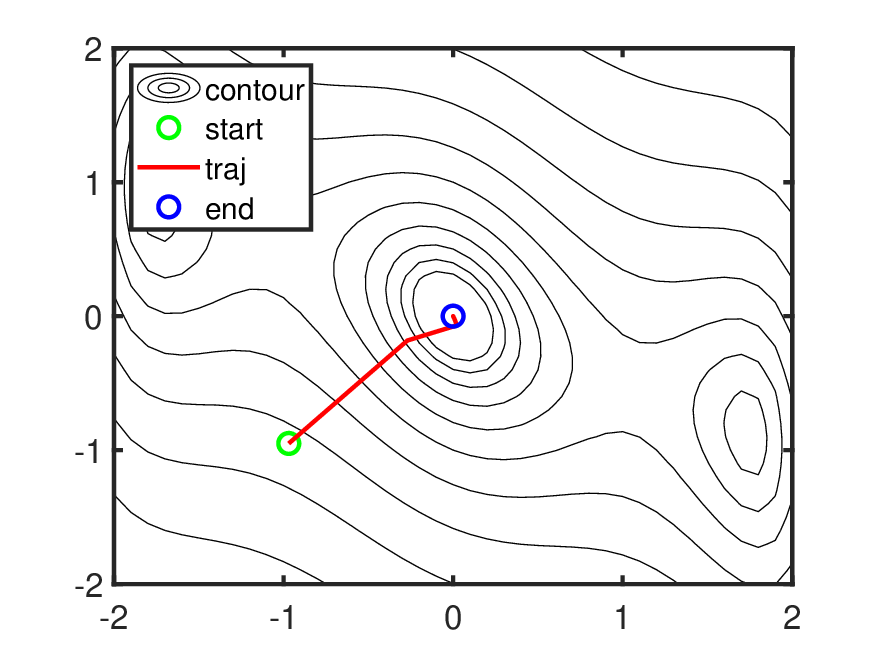}
    \label{fig:camel3Trace}}
    \subfigure[$\|\nabla f(\bm x^{(k)}) \|$ v.s.~$k$.]{
    \includegraphics[width = 0.4\textwidth]{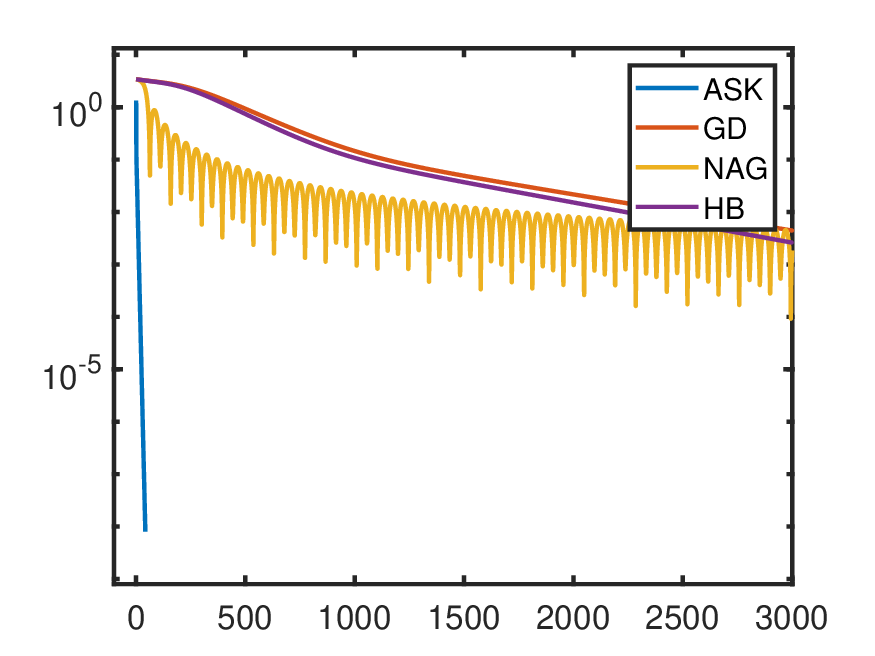}
    \label{fig:camel3Grad}}    
    \caption{Algorithmic behaviors on the three-hump Camel function. 
    The proposed algorithm converges the fastest to a critical point.}
    \label{fig:Camel3}
\end{figure}

\begin{table}
    \centering
    \begin{tabular}{ccc}
    \hline\hline
         Methods & $\|\nabla f\|$ & Time \\
         \hline
         ASK &\textbf{ 7.9837e-09} & \textbf{8.0243e-03} \\
         GD & 6.6493e-08 & 7.9303e-02 \\
         NAG & 4.5331e-04 & 5.6113e-02 \\
         HB & 1.1336e-08 & 5.5693e-02 \\
    \hline\hline
    \end{tabular}
    \caption{Three-hump Camel function, init=$10\times\text{rand}(2,1)-5$.}
    \label{tab:Camel3}
\end{table}

We then consider a six-hump Camel function \cite{hedar2007global}, defined by
\begin{align}
    f(x_1, x_2) = \left(4 - 2.1 x_{1}^{2} + \frac{x_{1}^{4}}{3}\right)x_{1}^{2} + x_{1}x_{2} + (-4 + 4x_{2}^{2})x_{2}^{2},
\end{align}
as a testing function. This function is non-convex and has a global minimum of approximately $f(\h x^{*}) = -1.0316$ at $(0.0898,-0.7126)$ and $(-0.0898,0.7126)$, along with four other local minima. 
We select  random initial points from $[-3,3]\times[-2,2]$ and present the results in \Cref{fig:Camel6} and \Cref{tab:Camel6}.
\Cref{fig:Camel6} illustrates that the proposed algorithm requires only a few iterations to converge. \Cref{tab:Camel6} indicates that ASK slightly outperforms other methods in terms of the gradient norm. While all the methods have comparable computation times, as listed in \Cref{tab:Camel6}, the complexity per ASK iteration is rather high due to the eigen-decomposition.

\begin{figure}
    \centering
    \subfigure[Trajectory of ASK method]{
    \includegraphics[width=0.4\textwidth]{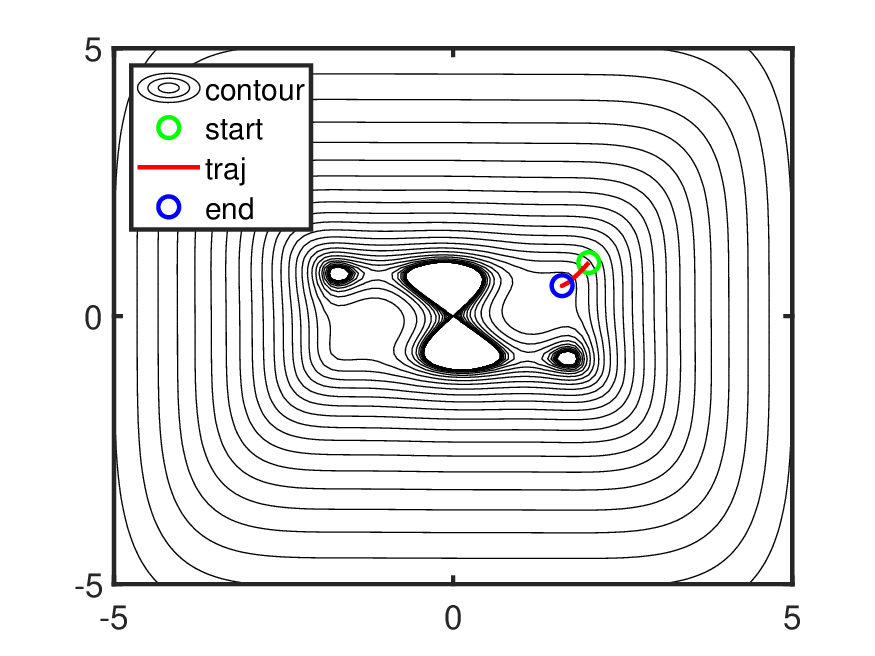}
    \label{fig:camel6Trace}}
    \subfigure[$\|\nabla f(\bm x^{(k)}) \|$ v.s.~$k$.]{
    \includegraphics[width=0.4\textwidth]{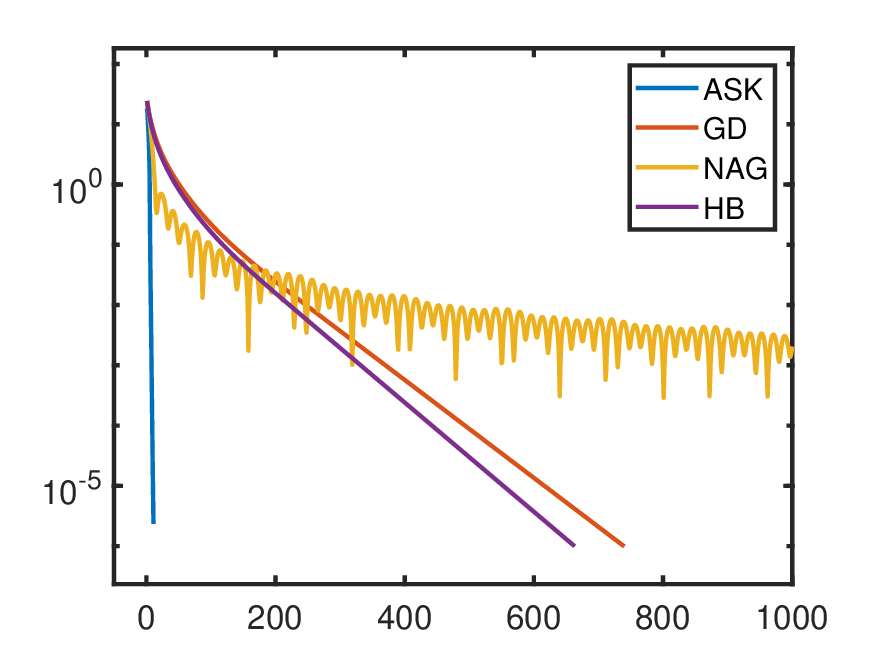}
    \label{fig:camel6Grad}}
    \caption{Algorithmic behaviors on the six-hump Camel function. 
    The proposed algorithm converges the fastest to a critical point.}
    \label{fig:Camel6}
\end{figure}

\begin{table}
    \centering
    \begin{tabular}{ccc}
    \hline\hline
         Methods & $\|\nabla f\|$ & Time \\
         \hline
         ASK & 5.3550e-07 & 9.3056e-03 \\
         GD & 9.9561e-07 & 7.4152e-03 \\
         NAG & 2.9041e-03 & 6.5248e-03 \\
         HB & 9.8639e-07 & 5.1908e-03 \\
    \hline\hline
    \end{tabular}
    \caption{Six-hump Camel Function, init=$6\times\text{rand}(2,1)-3$. }
    \label{tab:Camel6}
\end{table}

The third valley-shaped testing function  is the Dixon-Price function \cite{hedar2007global,jamil2013literature}, defined by
\begin{align}
    f(\h x) = (x_{1} - 1)^{2} +\sum_{j=2}^{d} j(2x_{j}^{2} - x_{j-1})^2.
    \label{eq:Dixonpr}
\end{align}
The function has a global minimum of $f(\h x^{*})= 0$ at $x_{j}^{*} = 2^{-\frac{2^{j}-2}{2^{j}}}$ for $j=1,\cdots, d$. We examine the case of $d = 2$ here,  $d=10$ in \Cref{sect:HD}, and set $d = 100$ as a min-max problem in \Cref{sect:exp-minmax}.
We randomly choose initial points from $[-10,10]\times[-10,10]$.  \Cref{tab:Dixonpr} demonstrates that our proposed method achieves nearly three orders of magnitude improvement over GD, while being three orders of magnitude faster. NAG does not converge to any critical point, as indicated by a large gradient norm.
The HB method achieves a result similar to ASK but takes ten times longer.
 \Cref{fig:Dixonpr} aligns with \Cref{tab:Dixonpr}, showing that ASK converges in a few iterations, while NAG  diverges.


\begin{table} 
    \centering
    \begin{tabular}{ccc}
    \hline\hline
         Methods & $\|\nabla f\|$ & Time \\
         \hline
         ASK & \textbf{9.4133e-08} & \textbf{6.7219e-03}\\
         GD & 1.5514e-04 & 1.1269e+00 \\
         NAG & 3.1235e-01 & 4.3161e-02 \\
         HB & 5.6322e-07 & 9.7755e-02 \\
    \hline\hline
    \end{tabular}
    \caption{Dixon-Price function, init=$20\times\text{rand}(2,1)-10$.}
    \label{tab:Dixonpr}
\end{table}

\begin{figure} 
    \centering
    \subfigure[Trajectory of ASK method]{
    \includegraphics[width=0.4\textwidth]{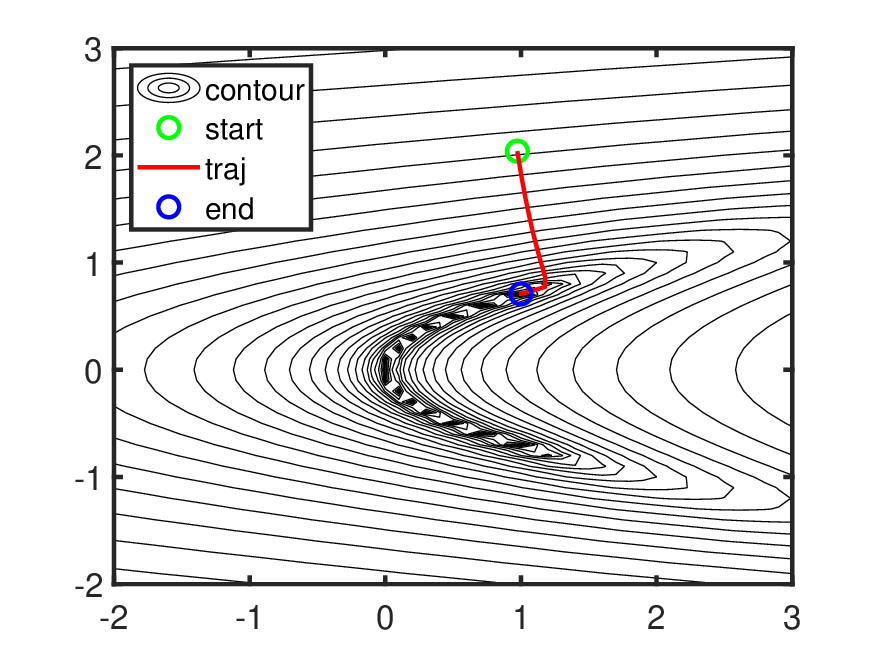}
    \label{fig:DixonprTrace}}
    \subfigure[$\|\nabla f(\bm x^{(k)}) \|$ v.s.~$k$.]{
    \includegraphics[width=0.4\textwidth]{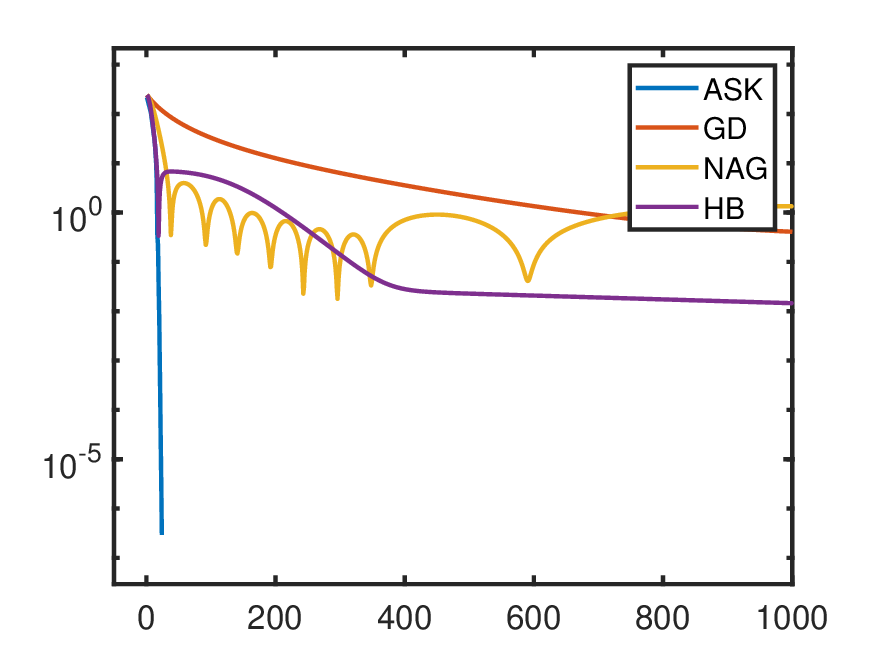}
    \label{fig:DixonprGrad}}
    \caption{Algorithmic behaviors on the Dixon-Price function. 
    The proposed algorithm converges to a critical point with a few iterations, while NAG seems diverges in the end.}
    \label{fig:Dixonpr}
\end{figure}

The last valley-shaped testing function is the Rosenbrock function \cite{rosenbrock1960automatic,hedar2007global} given as 
\begin{align}
    f(x_1, x_2) =100(x_{2} - x_{1}^{2})^{2} + (x_{1} - 1 )^{2}.
    \label{eq:rosen}
\end{align}
This function has a global minimum of $f(\h x^{*}) = 0$ at $\h x^{*} = (1,1)$. 
We start with a random initial point in $[-2,2]\times[-2,2]$. 
The quantitative behavior is presented in  \Cref{tab:rosenbrock}, showing that ASK yields the highest accuracy in terms of the gradient norm with the least amount of time. 
 \Cref{fig:RosenTrace} illustrates the trajectory of the ASK method on a contour plot, while
  \Cref{fig:RosenGrad} reveals a barrier in the path where the gradient's norm is between $10^{-1}$ and $10^{-2}$, signifying that all the competing methods significantly slow down. It takes ASK about 100 iterations to overcome this barrier. 
Naive GD and HB methods are able to pass through this barrier, while NAG fails in this case. We postulate that this phenomenon might stem from the intricate dynamics surrounding the crescent-shaped non-convex set where local minima are located.
Additionally, we note that we set the level of approximation to 3 for this example, rather than the default choice of 1, as we believe a higher-order approximation is required to achieve better results.



\begin{table}
    \centering
    \begin{tabular}{ccc}
    \hline\hline
     Methods & $\|\nabla f\|$ & Time\\
     \hline
     ASK&\textbf{3.0836e-07} & \textbf{2.6915e-02} \\
     GD&5.4417e-05 & 7.4911e-01 \\
     NAG&5.9793e-02 & 1.0775e-01 \\
     HB&1.2757e-05 & 2.9323e-01 \\
     \hline\hline
    \end{tabular}
    \caption{Rosenbrock function, init=$4\times\text{rand}(2,1)-2$.}
    \label{tab:rosenbrock}
\end{table}

\begin{figure}
    \centering
    \subfigure[Trajectory of ASK method]{
    \includegraphics[width=0.4\textwidth]{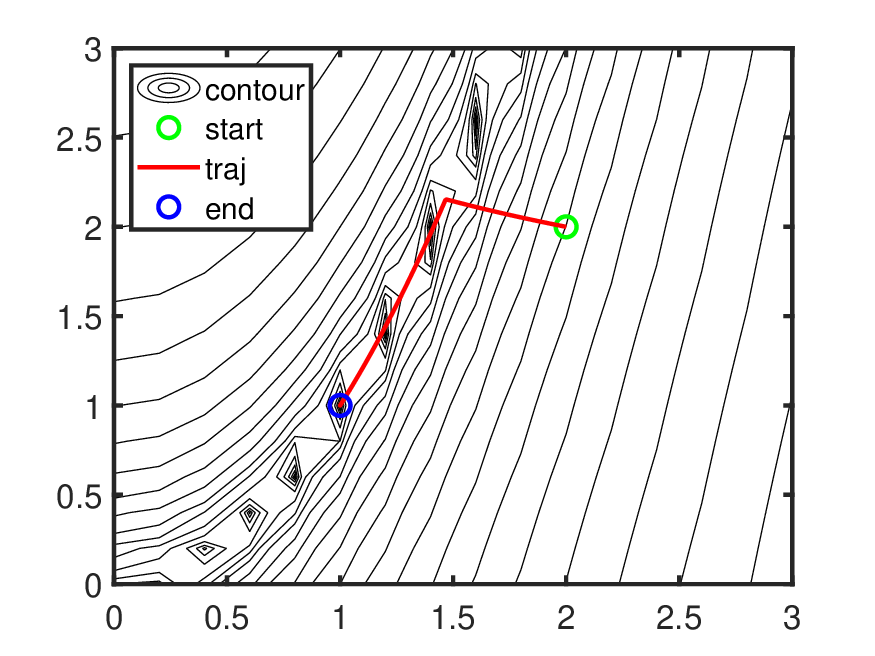}
    \label{fig:RosenTrace}
    }
    \subfigure[$\|\nabla f(\bm x^{(k)}) \|$ v.s.~$k$.]{
    \includegraphics[width=0.4\textwidth]{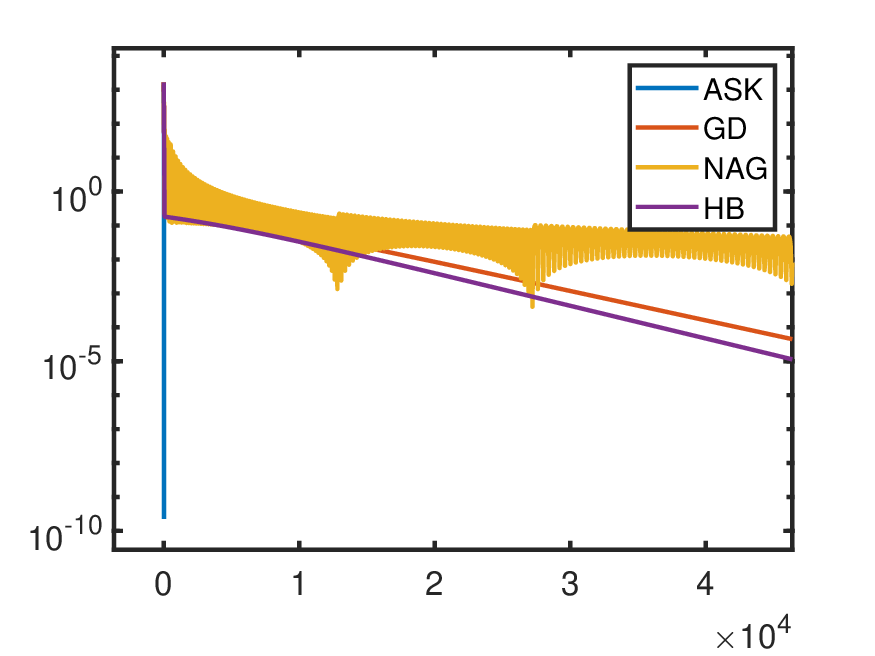}
    \label{fig:RosenGrad}
    }
    \caption{
    Algorithmic behaviors on the Rosenbrock function. All the gradient-based methods (GD, HB, and NAG) encounter difficulties or substantial slowdowns within the asymmetric valley that houses all local minima. In contrast, our proposed algorithm (ASK) manages to traverse efficiently within the valley, pinpointing a local minimum, attributed to its built-in adaptive mechanism.
    }
    \label{fig:rosenbrock}
\end{figure}

\subsubsection{High-dimensional examples}\label{sect:HD}
We investigate two high-dimensional testing functions for minimization. The first testing function is a 10-dimensional Dixon-Price function defined in \cref{eq:Dixonpr} with $d = 10$.
The comparison results are summarized in \cref{tab:Dixonpr10}, indicating that  the 
proposed algorithm returns the gradient norm closest to $0$ with orders of magnitude faster convergence than other methods.

\begin{table}[ht]
    \centering
    \begin{tabular}{ccc}
    \hline\hline
         Methods  &$\|\nabla f\|$& Time \\
         \hline 
         ASK&\textbf{2.2497e-06}&\textbf{6.7327e-02} \\
         GD&3.5192e-04&2.2789e+00 \\
         NAG&1.9299e-03&1.5885e-01 \\
         HB&1.3572e-04&6.1230e-01 \\       
         \hline\hline
    \end{tabular}
    \caption{10-d Dixon-Price Function, init=$20\times\text{rand}(2,1)-10$. }
    \label{tab:Dixonpr10}
\end{table}

The second testing function is the least-squares minimization, which can be expressed as
\begin{align}
    f(\h x) = \frac{1}{2} \h x^{T} A\h x - \h x^{T} \h b,
\end{align}
where $A$ is a  $d\times d$ matrix of full rank and $\h x, \ \h b$ are $d$-dimensional vector satisfying $A\h x = \h b$. 
We vary $d$ by $200,400,600$ and record the corresponding results in  \Cref{tab:lsq}.
In all the three cases, our proposed method finds a solution with its gradient norm several orders of magnitude smaller than the other methods with comparable run time.

\begin{table}[ht]
    \centering
    \begin{tabular}{ccc}
    \hline\hline
         Methods & $\|\nabla f\|$ & Time\\
         \hline
            dim = 200\\
            ASK&	1.8745e-09&	6.8912e+00 \\
            GD&	4.2493e-03&	4.0453e+01 \\
            NAG&	1.4150e-05&	9.6620e+00 \\
            HB&	1.5566e-03&	4.8435e+01 \\
         \hline
            dim = 400\\
            ASK&	5.6230e-11&	9.6911e+01 \\
            GD&	1.5588e-02&	1.4123e+02 \\
            NAG&	2.3108e-05&	4.5453e+01 \\
            HB&	8.7132e-06&	1.3917e+02 \\
         \hline
            dim = 600\\
            ASK&	2.9439e-09&	4.5376e+02 \\
            GD&	1.3730e-03&	2.9747e+02 \\
            NAG&	1.1905e-06&	1.7545e+02 \\
            HB&	5.4395e-05&	3.6776e+02 \\
         \hline\hline
    \end{tabular}
    \caption{Least-squares minimization under with dimensions of $200\times 200, 400\times 400, 600\times 600.$ For each case, ASK achieves the highest accuracy with comparable time. 
    }
    \label{tab:lsq}
\end{table}





\subsection{Min-Max problems}\label{sect:exp-minmax}
In this section, we examine a series of numerical experiments designed to solve the min-max problem \eqref{eq:minmax setup}.
Deliberately selecting four illustrative examples, we aim to conduct an in-depth exploration of the dynamics and efficacy of our proposed methodology:
\begin{enumerate}
    \item[(i)]  A challenging saddle point: we consider a system characterized by a particularly intricate saddle point, a scenario known to pose considerable challenges for traditional gradient-based algorithms. 
    
    \item[(ii)] Unique solution. While simpler in nature, this case underscores the effectiveness of our approach in handling well-defined optima.
    
    \item[(iii)] Multiple solutions: we examine situations with multiple solutions that introduce complexity during the evolution of an algorithm. This case emphasizes the versatility of our proposed method in addressing diverse problem structures.
    
    \item[(iv)] High-Dimensional case. Using a 100-dimensional testing function, we aim to demonstrate the scalability and adaptability of our approach.
\end{enumerate}
Collectively, these examples comprehensively evaluate our proposed methodology, showcasing its performance and adaptability in addressing a spectrum of min-max optimization challenges.

We select a set of well-established gradient-based optimization techniques as benchmarks for comparison, including gradient descent ascent (GDA) \eqref{eq:gradient descend ascend}, optimistic GDA with momentum (OGDA) \eqref{eq:Optimistic Gradient Descent Ascent}, Nesterov's Accelerated Gradient (NAG) \eqref{eq:Nest accel grad}, and the heavy ball (HB) gradient method  \eqref{eq:grad momentum}. 
OGDA, NAG, and HB introduce momentum terms in the optimization process to expedite the convergence of the gradient-based method GDA.  
Due to the complex nature of min-max problems, there are cases when an algorithm fails to converge. Consequently, we report the success rates (i.e., $\|f(\h x)\|<10^{-6}$ along with the gradient norm
and time consumption averaged over all successful trials.



\paragraph{Case (i)}
We start with a simple function defined by
\begin{align}
    f(x_1, x_2) = x_{1}x_{2}.
\end{align}
The Hessian matrix of this system is given by
$H = \begin{bmatrix}
0 & 1 \\
1 & 0
\end{bmatrix}$.
The determinant of the Hessian matrix is $\text{det}(H) = -1$, which implies that $(0, 0)$ is the unique saddle point to the problem:
\[
\min_{x_1}\max_{x_2} x_{1}x_{2}.
\]
Unfortunately,  the saddle point is not stable, as the eigenvalues of this matrix are imaginary. 
Starting from an initial point within the domain $[-1,1]\times[-1,1]$, we present the ASK trajectory in
\Cref{fig:xy_traj} alongside the gradient norm plotted in \Cref{fig:xy_grad}, illustrating plateaus and oscillations in the evolution process. 
As illustrated in \Cref{fig:xy_traj_ogda} and \Cref{fig:xy_grad_ogda}, the trajectory of OGDA circles around $(0,0)$ and the reduction of $\|\nabla f\|$ slows down with each iteration. 
Table \ref{tab:xy result} reports the gradient norm of the solution, the run time, and the success rate of each method over 100 random initial points.
It is evident that ASK is the only method capable of converging to the unique saddle point within a reasonable time frame, whereas both GDA and NAG diverge. The reason ASK succeeds in this example lies in the fact that
the Koopman operator maps a finite-dimensional non-linear dynamical system to an infinitely dimensional linear system, which can be approximated and truncated by ASK. By carefully selecting the acceptable range radius $r$, our truncated approximation of the Koopman operator effectively tracks the descending factor in this problem. In contrast, traditional methods that strictly follow the exact gradient are unable to adapt to the problem's nuances, leading to their slower convergence or in some cases, failure to converge. In summary, ASK's success can be attributed to its adaptability to the underlying structure of the optimization landscape, enhancing its performance, especially in situations where traditional methods encounter difficulties in navigating complex, non-linear dynamics.




\begin{figure}
    \centering
    \subfigure[ASK trajectory]{
        \includegraphics[width=0.4\textwidth]{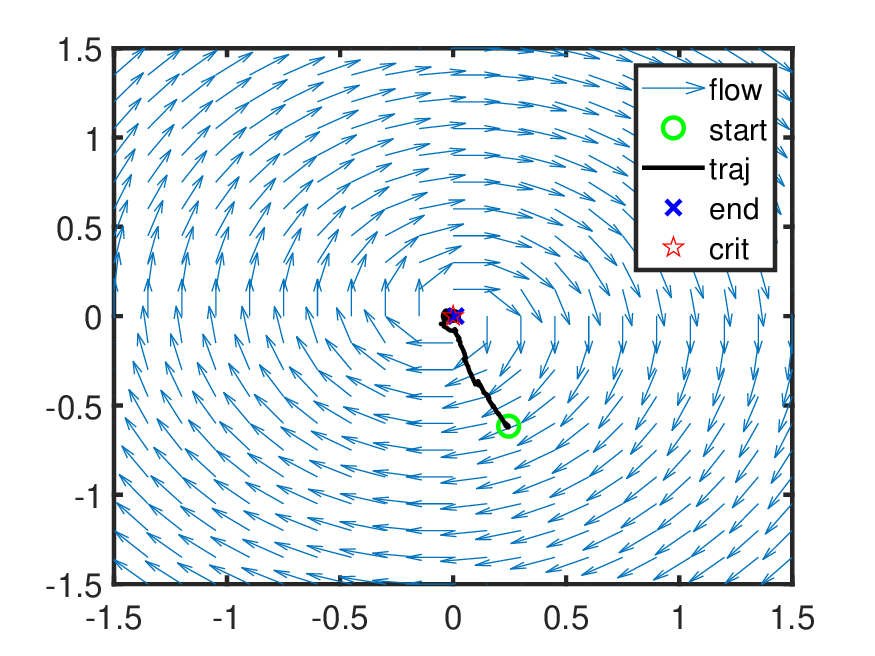}
        \label{fig:xy_traj}
        }
    \subfigure[$\|\nabla f(\bm x^{(k)})) \|$ of ASK]{
        \includegraphics[width=0.4\textwidth]{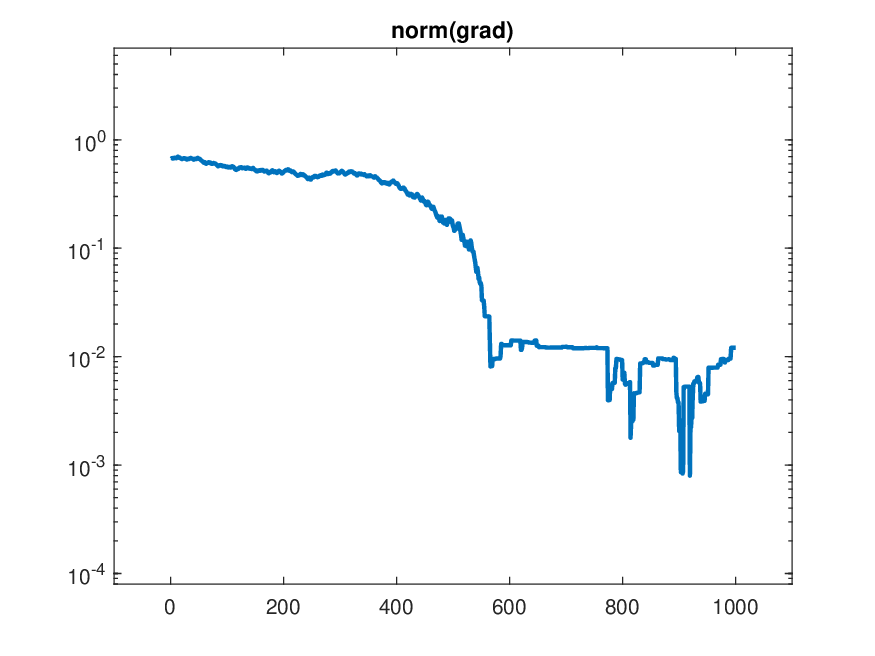}
        \label{fig:xy_grad}  
        }\\
    \subfigure[OGDA trajectory]{
        \includegraphics[width=0.4\textwidth]{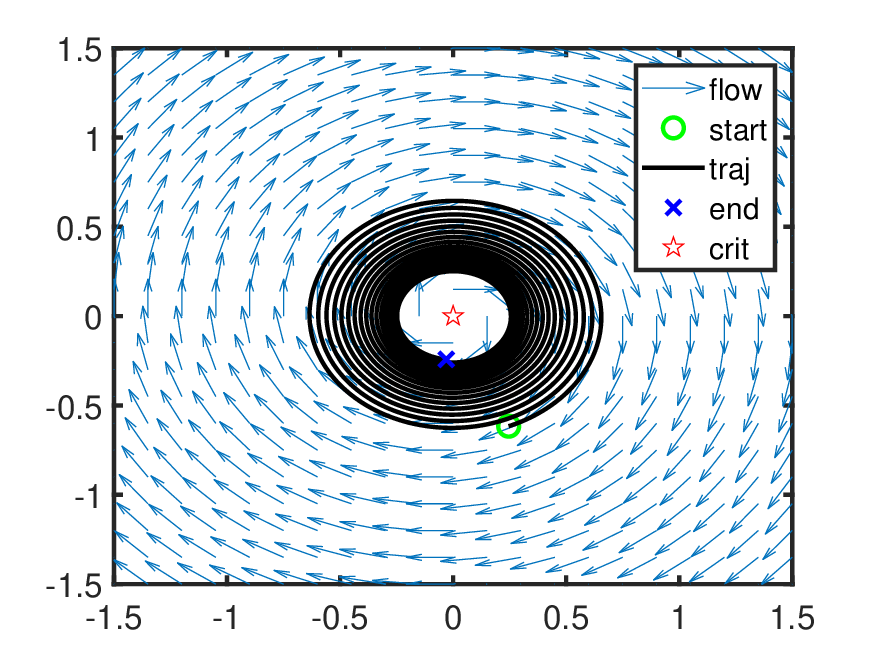}
        \label{fig:xy_traj_ogda}
    }
    \subfigure[$\|\nabla f(\bm x^{(k)})) \|$ of OGDA]{
        \includegraphics[width=0.4\textwidth]{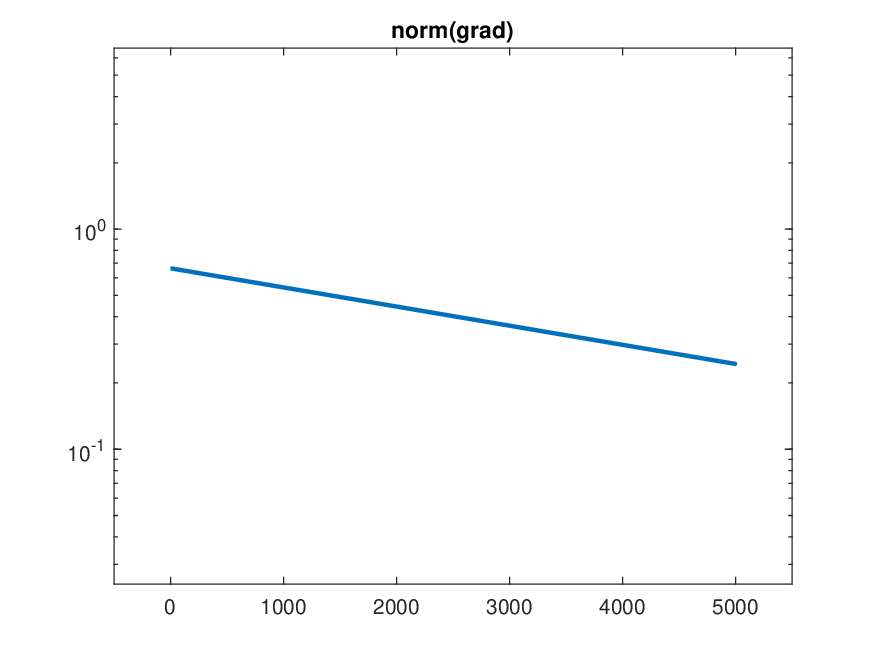}
        \label{fig:xy_grad_ogda}
    }    
    \caption{The trajectory of ASK and OGDA evolution from a particular initial point for solving $\min_{x_1}\max_{x_2} f(x_1,x_2)= x_1x_2.$ OGDA does not work well due to the lack of stability.}
\end{figure}

\begin{table}
    \centering
    \begin{tabular}{cccc}
    \hline\hline
         Methods & $\|\nabla f\|$ & Time(Avg.)& Success rate  \\
         \hline         
         ASK& 5.4423e-03&2.3591e+00& 1.00 \\
         OGDA& 6.5660e-01&6.9367e+01& 0.00  \\
         GDA& 1.3918e+00&9.8197e-03& 0.00  \\
         NAG& 5.2703e+01&6.2550e-03& 0.00  \\
         HB& 1.8026e+00&6.1683e-03& 0.00  \\
         \hline\hline
    \end{tabular}
        \caption{$f(x_1,x_2) = x_{1}x_{2}$, init=$2 \times\text{rand}(2,1)-1.$ The proposed ASK is the only method that can converge to the unique saddle point with 100$\%$ success rates.}
    \label{tab:xy result}
\end{table}



\begin{figure}
    \centering
    \subfigure[ASK trajectory]{
    \includegraphics[width=0.4\textwidth]{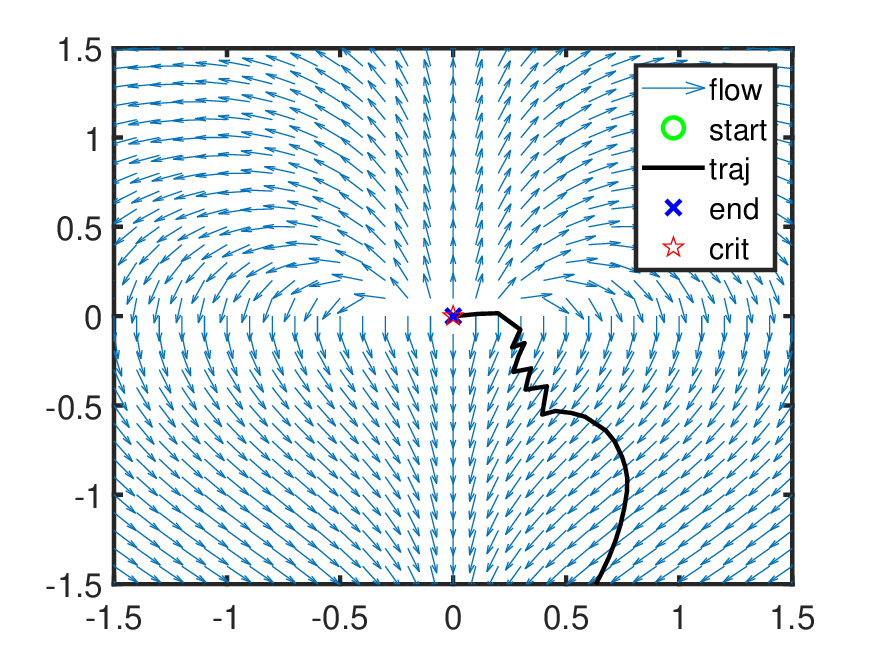}
    \label{fig:simple_traj}}
    \subfigure[$\|\nabla f(\bm x^{(k)})) \|$ of ASK]{
    \includegraphics[width=0.4\textwidth]{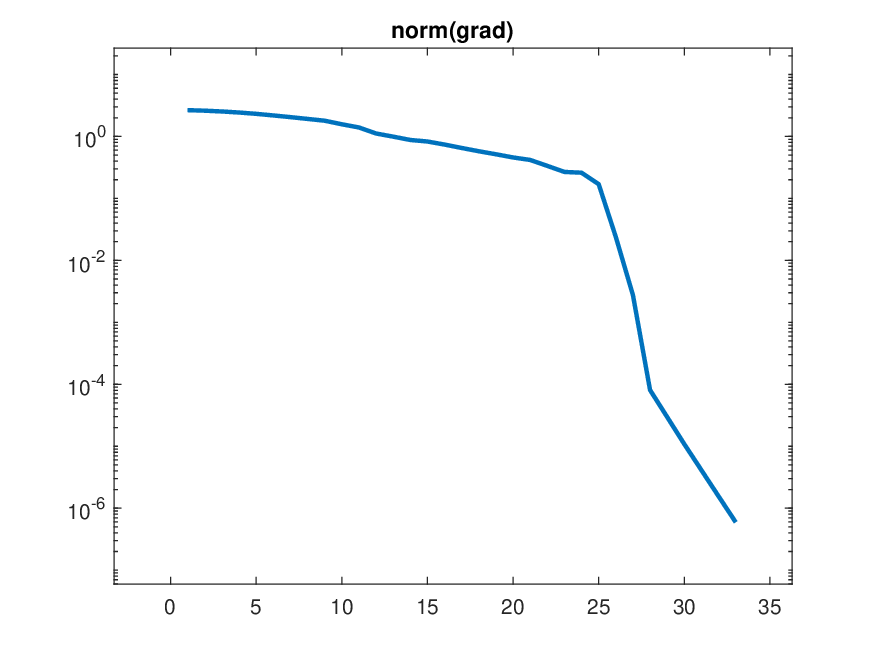}
    \label{fig:simple_grad}}
    \caption{(a) A zig-zag trajectory of the ASK evolution  for solving $\min_{x_1}\max_{x_2} f(x_1,x_2)=-x_{1}^{2}x_{2} + 0.5{x_{2}^{2}}$ from a particular initial point. 
    (b) The evolution of $\|\nabla f\left(\h x^{(n)}\right) \|$ illustrates that the proposed method encounters challenges in the first 25 iterations.  \comm{with some local dynamics}  Here, $x_{1} = 0$ is near the border of the neighborhood and the dynamic along that line is degenerated along the x-axis, which corresponds to a singular matrix. Consequently, the gradient drops rapidly when the trajectory gets close enough to this line.
    }
    \label{fig:simple}
\end{figure}

\begin{table}
    \centering
    \caption{$f(x_1, x_2) = -x_{1}^{2}x_{2} + 0.5{x_{2}^{2}}$, init=$2 \times\text{rand}(2,1) - 1$}
    \begin{tabular}{cccc}
    \hline\hline
         Methods&$\|\nabla f\|$& Time&Success rate\\
         \hline
         ASK& 5.5890e-07&1.3220e-02&1.00 \\
         GDA& 4.5703e-04&9.1963e-02&0.99 \\
         OGDA& 7.6267e-05&2.0828e-01&1.00 \\
         NAG& 1.3524e-3&5.8679e-02&0.51 \\
         HB& 3.9246e-04&5.8969e-02&1.00 \\
         \hline\hline
    \end{tabular}
    \label{tab:simple result}
\end{table}
\paragraph{Case (ii)} We consider the following function
\begin{align}
    f(x_1,x_2) = -x_{1}^{2}x_{2}^{2} + 0.5{x_{2}^{2}},
\end{align}
which has the unique saddle point of $f(\h x^{*}) = 0$ at $\h x^{*} = (0,0)$.
We choose the initial points from $[-1,1]\times[-1,1]$. For a specific initial point, \Cref{fig:simple_traj} shows the trajectory of the ASK evolution, revealing a zig-zag behavior caused by local complexities where the system's dynamics shift.
Consequently, these localized complexities necessitate multiple reevaluations of \comm{the local dynamics via} SVD to closely track an equilibrium.
In \Cref{fig:simple_grad},  the norm of its gradient at each iteration is plotted. 
It appears that the proposed method encounters challenges with \comm{some local dynamics} in the first 25 iterations. Here, $x_{1} = 0$ is near the border of the neighborhood and the dynamic along that line is degenerated along the x-axis, which corresponds to a singular matrix. Consequently, the gradient drops rapidly when the trajectory gets close enough to this line.
 \Cref{tab:simple result} records the gradient norm of the final solution returned by each method, computation time, and success rate over 100 random initial conditions. All the methods, except NAG, demonstrate a high success rate in finding the unique saddle point. ASK achieves the best results in terms of $\|\nabla f(\h x)\|$ and run time. In addition, OGDA achieves higher success rates than GDA and HB but at a cost of approximately twice the time consumption.


\begin{figure}
    \centering
    \subfigure[ASK trajectory]{
    \includegraphics[width=0.4\textwidth]{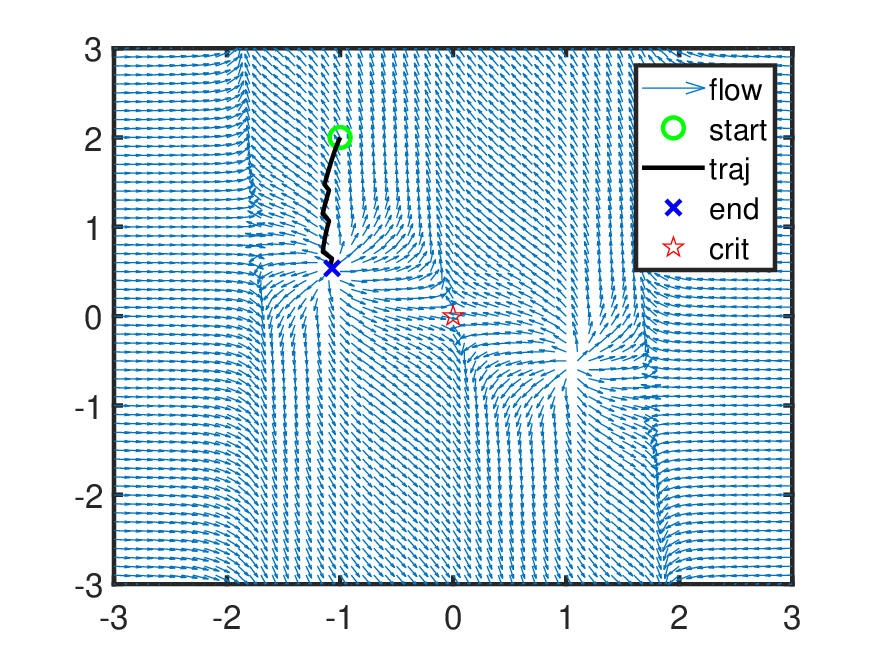}
    \label{fig:camel3MM_traj}}
    \subfigure[$\|\nabla f(\bm x^{(k)})) \|$ of ASK]{
    \includegraphics[width=0.4\textwidth]{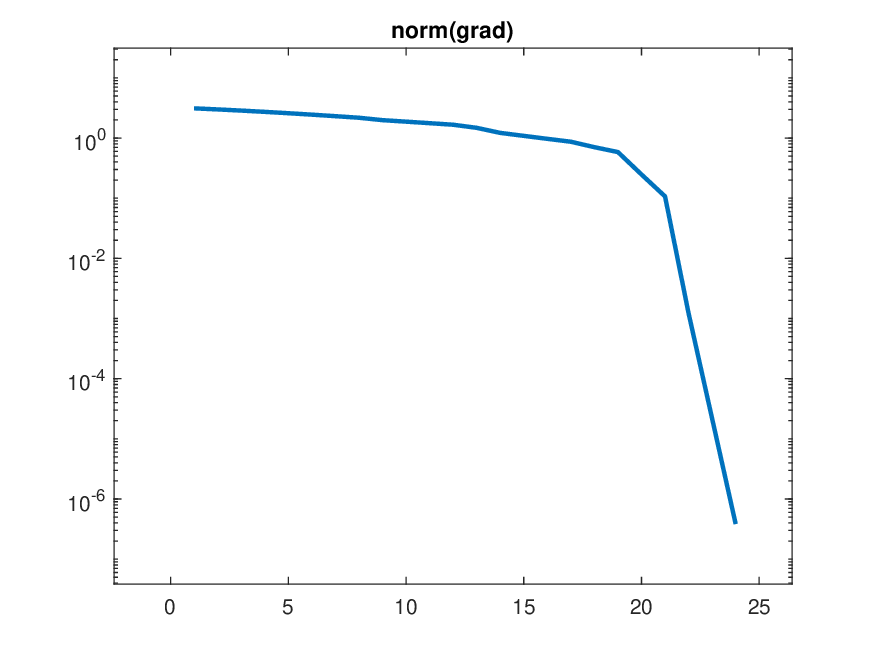}
    \label{fig:camel3MM_grad}}
    \caption{(a) The dynamic of the 3-Hump Camel function and the trajectory of the ASK evolution from a specific initial point. (b) The plot of $\|\nabla f(\bm x^{(k)}))\|$ shows a gradual decrease for the first 20 iterations, followed by a rapid decay to $10^{-6}$. }
    \label{fig:Camel3MM}
\end{figure}

\paragraph{Case (iii)} We investigate the three-hump Camel function, defined in \eqref{eq:camel3}.
\Cref{fig:Camel3MM} presents the ASK evolution and the gradient norm. Our algorithm exhibits a gradual decrease for the first 20 iterations, followed by a rapid decay to $10^{-6}$. The comparisons with other methods are documented in \Cref{tab:Camel3 result}, with each recorded value representing the average over 100 random initial points in the domain of $[-3,3]\times[-3,3]$. The proposed method achieves comparable success rates to GDA, OGDA, and HB but it is an order of magnitude faster. 


\begin{table}
    \centering
    \caption{3-hump Camel function, init = $6 \times\text{rand}(2,1) - 3$}
    \begin{tabular}{cccc}
    \hline\hline
         Methods&$\|\nabla f\|$& Time&Sucess rate\\
         \hline
         ASK& 3.8458e-07&7.8893e-03&0.82 \\
         GDA& 9.9916e-07&4.7122e-02&0.84 \\
         OGDA& 9.9755e-07&4.4569e-02&0.86 \\
         NAG& N/A&N/A&0.00 \\
         HB& 9.9745e-07&3.3747e-02&0.80 \\
         \hline\hline
    \end{tabular}
    \label{tab:Camel3 result}
\end{table}



\paragraph{Case (iv)} The last testing function is the 100-d Dixon-Price function, defined in \eqref{eq:Dixonpr} with $d = 100$.
The comparison results are presented in \Cref{tab:Dixonpr100} averaged over 100 initial points chosen randomly from $[-10,10]^{100}$. 
The proposed algorithm gives the highest success rates while being several orders of magnitude faster than other methods. 

\begin{table}
    \centering
    \caption{100d Dixon-Price, init = $20 \times\text{rand}(100,1) - 10$}
    \begin{tabular}{cccc}
    \hline\hline
         Methods&$\|\nabla f\|$& Time&Sucess rate\\
         \hline
         ASK& 6.5132e-07&7.8893e-03&0.85 \\
         GDA& N/A&N/A&0.00 \\
         OGDA& N/A&N/A&0.00 \\
         NAG& N/A&N/A&0.00 \\
         HB& 5.9745e-02&4.1235e+01&0.20 \\
         \hline\hline
    \end{tabular}
    \label{tab:Dixonpr100}
\end{table}

\section{Conclusions}
\label{sec:dicussion_conclusion}

In this paper, we have harnessed the power of the Adaptive Spectral Koopman (ASK) method \cite{li2023adaptive} with sparse grids \cite{li2022sparse} that was originally designed for dynamical systems to general optimization problems, including a min-max problem. 
Instead of closely tracking a single trajectory, we adapted the ASK method in such a way that it focuses on the dynamics of the system and identifies its   equilibrium, which corresponds to a (local) critical point of the relevant optimization problem.  We provided an error estimate of a special case, which connects the accuracy of the spectral methods based on sparse grid points and the proposed method. 
We conducted extensive experiments on various testing cases for a thorough assessment of ASK's capabilities.
Our numerical findings consistently highlight the advantage of accuracy obtained by the ASK method, indicating its superior precision in finding critical points over some popular gradient-based methods. Remarkably, this elevated accuracy does not incur a substantial computational cost, as ASK's computational time remains comparable to, or marginally higher than, that of traditional optimization methods. 
The experimental results on min-max problems suggested an improved capability of ASK to identify critical points, particularly in scenarios where \comm{local dynamics} system's dynamics in the neighborhood at the current state effectively approximate the system behaviors globally.
Collectively, this research demonstrates the efficacy and versatility of the ASK method as a powerful tool for a wide spectrum of optimization problems. Its remarkable accuracy, coupled with reasonable computational costs, positions it as a promising candidate for real-world applications. 

\bibliographystyle{siam}
\bibliography{references}
\end{document}